\newtheorem{thm}{Theorem}[section]
\newtheorem{prop}[thm]{Proposition}
\newtheorem{lemma}[thm]{Lemma}
\newtheorem{cor}[thm]{Corollary}
\newtheorem{defi}[thm]{Definition}
\newcommand{\br}{{\mathbb R}}
\newcommand{\ra}{\rightarrow}
\begin{document}
\title[Bounded cohomology]
{Bounded cohomology and negatively curved manifolds}

\author{Sungwoon Kim}
\address{School of Mathematics,
KIAS, Heogiro 85, Dongdaemun-gu, Seoul, 130-722, Republic of Korea}
\email{sungwoon@kias.re.kr}

\author{Inkang Kim}
\address{School of Mathematics
KIAS, Heogiro 85, Dongdaemun-gu Seoul, 130-722, Republic of Korea}
\email{inkang@kias.re.kr}

\footnotetext[1]{2000 {\sl{Mathematics Subject Classification.}}
53C23, 53C35, 20F67.} \footnotetext[2]{The second author gratefully
acknowledges the partial support of  NRF grant
(R01-2008-000-10052-0).}
\date{}

\begin{abstract}

We study the bounded fundamental class in the top dimensional bounded cohomology of
negatively curved manifolds with infinite volume.
We prove that the bounded fundamental class of $M$ vanishes if $M$ is geometrically finite.
Furthermore, when $M$ is a $\mathbb{R}$-rank one locally symmetric space,
we show that the bounded fundamental class of $M$ vanishes if and only if
the Riemannian volume form on $M$ is the differential of a bounded differential form on $M$.
\end{abstract}

\maketitle

\section{Introduction}

The bounded cohomology $H^*_b(M,\mathbb{R})$ of a topological space $M$ is defined as
the cohomology of the subcomplex $C^*_b(M,\mathbb{R})$ of the singular cochain complex
$C^*(M,\mathbb{R})$ consisting of bounded cochains.
The inclusion of the complex $C^*_b(M,\mathbb{R})$ into $C^*(M,\mathbb{R})$
gives rise to the comparison map $$c^*_M : H^*_b(M,\mathbb{R})\rightarrow H^*(M,\mathbb{R}).$$

Bounded cohomology classes encode subtle properties of algebraic and
geometric nature of Riemannian manifolds. In particular, the second
bounded cohomology classes have been intensively studied. For
instance, Ghys \cite{Gh87} proved that the bounded Euler class in
$H^2_b(\Gamma,\mathbb{R})$, which is inherited by a $\Gamma$-action
on a circle, completely characterizes the action up to
semi-conjugacy. Burger, Iozzi, and Wienhard \cite{BIW07} showed
that, when $X$ is an irreducible Hermitian symmetric space not of
tube type, the Zariski-dense representations into the isometry group
of $X$ for any finitely generated group $\Gamma$ are classified up
to conjugacy by the bounded K\"{a}hler class. Furthermore, the
second bounded cohomology of a closed surface was studied by
Brooks-Series \cite{BS84}, Mitsumatsu \cite{Mi84}, Barge-Ghys
\cite{BG88}, and so on. The aim of the present paper is to study the
vanishing of the  bounded fundamental class in the top dimensional
bounded cohomology in relation to the geometric property of
negatively curved manifolds.

Let $M$ be an $n$-dimensional, connected, complete Riemannian manifold
with negative sectional curvature bounded away from zero.
Thurston \cite{Th78} introduced the geodesic straightening map
$Str_* :C_*(M,\mathbb{R})\rightarrow C_*(M,\mathbb{R})$ homotopic to the identity.
It is well-known that the volume of geodesic simplices in $M$ is uniformly bounded from above \cite{IY81}. Hence,
a bounded singular $n$-cocycle $\widehat{\omega}_M$ is defined by
$$\widehat{\omega}_M(\sigma)=\omega_M (Str_n(\sigma))=\int_{Str_n(\sigma)} \omega_M $$
for a singular simplex $\sigma :\Delta^n\rightarrow M$, where $\omega_M$ is the Riemannian volume form on $M$.
The bounded $n$-cocycle $\widehat{\omega}_M$ determines a bounded cohomology class $[\widehat{\omega}_M]$
in $H^n_b(M,\mathbb{R})$ representing the volume class in $H^n(M,\mathbb{R})$.
We call the bounded cohomology class $[\widehat{\omega}_M]\in H^n_b(M,\mathbb{R})$
\textit{the bounded fundamental class} of $M$.

One can easily notice that if $M$ is a closed manifold, the bounded
fundamental class of $M$ is non-trivial because the volume class of
$M$ is non-trivial in $H^n(M,\mathbb{R})$. However, the situation in
the case of open manifolds is a bit different. The volume class of
an open manifold $M$ vanishes in $H^n(M,\mathbb{R})$. In spite of
that, the bounded fundamental class may not vanish in the top
dimensional bounded cohomolgy of $M$. Hence, it can be expected that
the bounded fundamental class of $M$ encodes more specific geometric
properties of $M$.

A negatively curved Riemannian manifold $M$ is called {\it
geometrically finite} if a neighborhood of the quotient of the
convex hull of the limit set has a finite volume. See section
\ref{negative} for details. For a complete hyperbolic $3$-manifold
$M$ with infinite volume, Soma \cite{So97} showed that
$[\widehat{\omega}_M]=0$ in $H^3_b(M,\mathbb{R})$ if and only if $M$
is geometrically finite. We further explore the bounded fundamental
class of general negatively curved manifolds and first obtain a
sufficient condition for the bounded fundamental class to vanish as
follows.

\begin{thm}\label{thm1-1}
Let $M$ be an $n$-dimensional, negatively curved geometrically
finite manifold with infinite volume. Then, $[\widehat{\omega}_M]=0$
in $H_b^n(M,\mathbb{R})$.
\end{thm}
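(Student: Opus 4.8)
The plan is to reduce the vanishing of $[\widehat{\omega}_M]$ to the existence of a bounded differential $(n-1)$-form $\alpha$ on $M$ with $d\alpha=\omega_M$, and then to construct such a form from the geometry of the geometrically finite ends. Granting $\alpha$, one argues as follows. Since geodesic straightening depends only on the vertices, $\partial_i(Str_n\sigma)=Str_{n-1}(\partial_i\sigma)$, so $Str$ is a chain map and Stokes' theorem gives, for each singular $n$-simplex $\sigma$,
\[
\widehat{\omega}_M(\sigma)=\int_{Str_n\sigma}d\alpha=\int_{\partial(Str_n\sigma)}\alpha=\sum_{i=0}^n(-1)^i\int_{Str_{n-1}(\partial_i\sigma)}\alpha=(\delta\beta)(\sigma),
\]
where $\beta(\tau):=\int_{Str_{n-1}\tau}\alpha$. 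Because the $(n-1)$-volume of a geodesic $(n-1)$-simplex in a manifold whose sectional curvature is bounded above by a negative constant is uniformly bounded (a Gauss--Bonnet/isoperimetric estimate of the kind underlying \cite{IY81}), and $\alpha$ is bounded, $\beta$ is a bounded $(n-1)$-cochain; hence $\widehat{\omega}_M=\delta\beta$ is a bounded coboundary and $[\widehat{\omega}_M]=0$ in $H^n_b(M,\mathbb{R})$.

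It remains to construct the bounded primitive $\alpha$. Write $M=\Gamma\backslash X$ and use geometric finiteness to split $M$ into the convex core $C(M)$ --- of finite volume and equal to a compact core $W_0$ together with finitely many standard cusp neighbourhoods $C_1,\dots,C_p$ --- and finitely many funnels $F_1,\dots,F_q$, with $F_j\cong P_j\times[0,\infty)$ for a boundary component $P_j$ of $C(M)$ and a metric $dt^2+g_t$ that flares outward. On $F_j$ the volume form is $dt\wedge dvol_{g_t}$, the $(n-1)$-form $\alpha_{F_j}:=\int_0^t dvol_{g_s}\,ds$ satisfies $d\alpha_{F_j}=\omega_M$, and since $P_j$ is convex and $K\le -a^2<0$, Riccati comparison for the equidistant hypersurfaces yields $dvol_{g_t}/dvol_{g_s}\ge e^{\lambda(t-s)}$ for some $\lambda>0$, so $|\alpha_{F_j}|\le 1/\lambda$. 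Dually, on a cusp $C_\ell\cong P_\ell\times[0,\infty)$ the cross-sectional volume contracts exponentially toward the cusp point (again by Riccati comparison, now for horospheres), and $\alpha_{C_\ell}:=-\int_d^{\infty}dvol_{h_{d'}}\,dd'$ is a bounded primitive of $\omega_M$ there.

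On the compact core $W_0$, an $n$-manifold with boundary $\partial W_0=\bigsqcup_\ell P_\ell\sqcup\bigsqcup_j P_j$, one has $H^n(W_0;\mathbb{R})=0$, so $\omega_M|_{W_0}=d\alpha_0$ for some (automatically bounded) $\alpha_0$; moreover $\alpha_0$ can be chosen with $\alpha_0|_{P_\ell}=\alpha_{C_\ell}|_{P_\ell}$ for every cusp, the obstruction lying in the cokernel of $H^{n-1}(W_0)\to\bigoplus_\ell H^{n-1}(P_\ell)$ and vanishing precisely because $W_0$ has at least one funnel boundary component whose unconstrained boundary value absorbs it --- this is exactly where the infinite-volume hypothesis is used. (For a finite-volume cusped manifold the same obstruction equals the total volume and is nonzero, matching the nonvanishing of the bounded fundamental class there.) Gluing $\alpha_0$ to the $\alpha_{C_\ell}$ gives a bounded primitive $\alpha_C$ of $\omega_M$ on $C(M)$; gluing $\alpha_C$ to the $\alpha_{F_j}$ along the $P_j$, after correcting each $\alpha_{F_j}$ by a bounded closed form pulled back from $P_j$ (such forms decay in the funnel) to kill the class of $\alpha_C|_{P_j}-\alpha_{F_j}|_{P_j}$ in $H^{n-1}(P_j)$ and then absorbing the residual exact discrepancy by a compactly supported exact correction, produces the global bounded form $\alpha$ with $d\alpha=\omega_M$.

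The main obstacle, and the heart of the argument, is the compact-core step: checking that the explicit cusp primitives can be matched from $W_0$, which is exactly where both hypotheses are essential --- geometric finiteness for the clean compact-core/cusp/funnel decomposition, infinite volume for the funnel that absorbs the matching obstruction. The supporting geometric facts --- the uniform exponential flaring and contraction rates (Riccati comparison under pinched curvature together with convexity of the convex core) and the uniform bound on the $(n-1)$-volume of geodesic simplices --- are standard in this setting but must be invoked with care.
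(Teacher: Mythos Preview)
Your argument is essentially correct but follows a genuinely different route from the paper. The paper does \emph{not} construct a bounded primitive on all of $M$. Instead it builds a primitive $\beta$ of $\omega_M$ that is bounded only on the convex core $\mathcal{C}_M$: on each cusp $\mathcal{P}_i$ the bounded primitive comes from descending a $P_i$-invariant bounded primitive of $\omega_X$ on the universal cover (Corollary~\ref{cor2-4}), and on the remaining piece $\mathcal{C}_M\setminus\mathcal{P}$ boundedness is automatic by compactness. The matching step is handled via the exact sequence $H^{n-1}_{dR}(M)\to H^{n-1}_{dR}(\mathcal{P})\to H^n_{dR}(M,\mathcal{P})$ for the pair $(M,\mathcal{P})$. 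The vanishing of $[\widehat\omega_M]$ is then transferred through the isomorphism $i^*:H^n_b(M,\mathbb{R})\to H^n_b(\mathcal{C}_M,\mathbb{R})$ coming from the homotopy equivalence $\mathcal{C}_M\hookrightarrow M$, using that $\mathcal{C}_M$ is convex so that straightened simplices with vertices in $\mathcal{C}_M$ remain there. The funnels are never touched.

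Your approach instead produces a global bounded primitive of $\omega_M$, handling the funnels explicitly via Riccati comparison and then gluing. You are also more explicit about where infinite volume enters: the presence of at least one funnel boundary component of $W_0$ absorbs the one-dimensional obstruction in $\mathrm{coker}\bigl(H^{n-1}(W_0)\to H^{n-1}(\partial W_0)\bigr)\cong H^n(W_0,\partial W_0)\cong\mathbb{R}$, whereas in the finite-volume case that obstruction equals the total volume. If carried through carefully (the gluing across the $P_j$ requires matching forms on a collar, not just their tangential restrictions), your argument actually yields a stronger conclusion than the paper obtains at this stage---a global bounded primitive of $\omega_M$ for any pinched negatively curved geometrically finite manifold of infinite volume. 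The paper only deduces this later, and only for $\mathbb{R}$-rank one locally symmetric spaces (Corollary~\ref{cor1-4}), by combining Theorem~\ref{thm1-1} with the Burger--Iozzi machinery of Theorem~\ref{thm1-3}. The paper's convex-core reduction is cleaner for the bounded-cohomology statement alone; your route is more constructive and gives more.
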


In particular, when $M$ is a three-dimensional, pinched negatively curved manifold with
infinite volume and positive injectivity radius, it turns out that
the bounded fundamental class of $M$
does not vanish in $H_b^3(M,\mathbb{R})$ if $M$ is not geometrically finite.
Thus, we can see that the bounded fundamental class of $M$ precisely encodes the geometrically
finiteness of $M$.

\begin{thm}\label{thm1-2}
Let $M$ be a complete, pinched negatively curved three-manifold with infinite volume
and positive injectivity radius.
Then, $[\widehat{\omega}_M]=0$ in $H^3_b(M,\mathbb{R})$
if and only if $M$ is geometrically finite.
\end{thm}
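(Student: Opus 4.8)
The ``if'' direction is immediate: a geometrically finite manifold of infinite volume with pinched negative curvature has negative sectional curvature bounded away from zero, so Theorem~\ref{thm1-1} applies and $[\widehat{\omega}_M]=0$. The content is therefore the converse, which I would prove in contrapositive form: if $M$ is not geometrically finite, then $[\widehat{\omega}_M]\neq 0$ in $H^3_b(M,\mathbb{R})$. The mechanism is an elementary remark. If $\widehat{\omega}_M=\delta\beta$ for a bounded singular $2$-cochain $\beta$, then for every finite singular $3$-chain $c$ one has $|\widehat{\omega}_M(c)|=|\beta(\partial c)|\le \|\beta\|_\infty\,\|\partial c\|_1$. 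Hence it suffices to construct a sequence of Lipschitz singular $3$-chains $c_i$ in $M$ with $\sup_i\|\partial c_i\|_1<\infty$ while $\widehat{\omega}_M(c_i)\to\infty$; such a sequence cannot coexist with a bounded primitive of $\widehat{\omega}_M$.

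To build the chains $c_i$ I would sweep out a geometrically infinite end. Since $M$ is not geometrically finite it has a geometrically infinite end $E$, and since $\mathrm{inj}(M)\ge\epsilon_0>0$ there are no cusps and $M$ has bounded geometry. Invoking the structure theory of ends of negatively curved $3$-manifolds of bounded geometry (Thurston's straightening of surfaces, Bonahon and Canary in the constant curvature case, and ruled or Gromov--Thurston surfaces in the pinched case), the end $E$ is properly exhausted by embedded closed surfaces $g_i(\Sigma)$ exiting the end, of uniformly bounded genus and, by a Gauss--Bonnet comparison using the curvature pinching, uniformly bounded area. Let $P_i$ be the compact oriented region of $M$ caught between $g_0(\Sigma)$ and $g_i(\Sigma)$. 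Since the surfaces exit $E$ properly and a geometrically infinite end has infinite volume, $\mathrm{vol}(P_i)\to\infty$.

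Next I would triangulate. Bounded geometry together with the bounds on area and genus allows each $g_i(\Sigma)$ to be triangulated by at most $N_0$ simplices, with $N_0$ independent of $i$ and each simplex of bounded diameter; I extend this to a smooth triangulation of the compact $3$-manifold-with-boundary $P_i$ and let $c_i$ be the resulting relative fundamental cycle, so that $\|\partial c_i\|_1\le 2N_0$ and $\int_{c_i}\omega_M=\mathrm{vol}(P_i)$. Finally I compare $\widehat{\omega}_M(c_i)=\int_{Str_3(c_i)}\omega_M$ with $\int_{c_i}\omega_M$. Since $Str_*$ is chain homotopic to the identity there is a chain homotopy $h$ with $c_i-Str_3(c_i)-h(\partial c_i)=\partial h(c_i)$, and integrating the closed $3$-form $\omega_M$ over this identity (the term coming from the $4$-chain vanishes because $d\omega_M=0$ on a $3$-manifold) gives $\widehat{\omega}_M(c_i)=\int_{c_i}\omega_M-\int_{h(\partial c_i)}\omega_M$. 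The $2$-simplices of $\partial c_i$ have uniformly bounded diameter, so $h$ sends them to $3$-chains supported in regions of uniformly bounded volume, whence $\bigl|\int_{h(\partial c_i)}\omega_M\bigr|$ is bounded independently of $i$. Therefore $\widehat{\omega}_M(c_i)\ge\mathrm{vol}(P_i)-\mathrm{const}\to\infty$, which together with the bound $|\widehat{\omega}_M(c_i)|\le 2\|\beta\|_\infty N_0$ from the first paragraph yields the desired contradiction.

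The step I expect to be the main obstacle is the sweepout in the second paragraph: producing, in a pinched (not constant) negatively curved $3$-manifold with possibly infinitely generated fundamental group, a proper exhaustion of a geometrically infinite end by closed surfaces of uniformly bounded area and genus, and checking that such an end has infinite volume. In the hyperbolic setting this is classical, but in the pinched setting one must substitute ruled or Gromov--Thurston surfaces for pleated surfaces, and the positive injectivity radius hypothesis is what makes the Gauss--Bonnet area bound effective and permits triangulating the sweepout surfaces with a uniformly bounded number of bounded-diameter simplices; some care with immersed versus embedded surfaces is also needed in defining the regions $P_i$.
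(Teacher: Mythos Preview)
Your proposal is correct and follows essentially the same route as the paper: the ``if'' direction via Theorem~\ref{thm1-1}, and for the converse the same contradiction obtained by sweeping a geometrically infinite (simply degenerate) end by exiting surfaces of uniformly bounded area, triangulating them with a uniformly bounded number of bounded-diameter simplices, and comparing $\widehat{\omega}_M(c_i)$ to $\mathrm{vol}(P_i)$ via the straightening chain homotopy. The paper is simply more explicit about the ingredients you flag as the main obstacle---it invokes Bonahon and Hou for the ruled-surface sweepout in pinched curvature, Freedman--Hass--Scott to replace the immersed ruled surfaces by nearby embedded ones, a Fejes~T\'oth--Buser packing argument together with Croke's small-disk area estimate to get the uniform triangulation bound $N_0$, and Waldhausen plus the $2$-dimensional Hauptvermutung to triangulate the product regions---but the architecture of the argument is the same as yours.
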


For a pinched negatively curved three-manifold $M$ with infinite
volume and positive injectivity radius, it can be shown that the
following conditions are equivalent.
\begin{itemize}
\item[(a)] $M$ is geometrically finite.
\item[(b)] The Cheeger isoperimetric constant of $M$ is strictly positive.
\item[(c)] The Riemannian volume form on $M$ admits a bounded primitive, i.e., the volume form is the differential of a bounded two form.
\end{itemize}
By the works of Bonahon \cite{Bo86} and Hou \cite{Ho03}, the conditions
$(a)$ and $(b)$ are equivalent.
Sikorav \cite{Sir} shows that the conditions $(b)$ and $(c)$ are equivalent
if $M$ has bounded geometry in
the sense that it is complete, its sectional curvature is bounded in absolute
value, and its injectivity radius is bounded below.
Recall that, given a differential form $\alpha$ on a Riemannian
manifold, if $d\beta =\alpha$ and  the norm of $\beta$ is
bounded, then $\beta$ is called a \textit{bounded primitive} of $\alpha$.
Therefore, it follows from Theorem \ref{thm1-2} that
$[\widehat{\omega}_M]=0$ in $H^3_b(M,\mathbb{R})$ is equivalent to
$(a)$, $(b)$, and $(c)$.

Although we get  four equivalent conditions only in the case of
pinched negatively curved three-manifolds with infinite volume and
positive injectivity radius, it gives us some clue to what
informations are encoded by the bounded fundamental class in
general. From Theorems \ref{thm1-1} and \ref{thm1-2}, one can expect
that the bounded fundamental class precisely encodes the positivity
of the Cheeger isoperimetric constant. Note that Theorem
\ref{thm1-1} supports the expectation because the Cheeger
isoperimetric constant of geometrically finite manifolds is strictly
positive by the work of Hamenst\"{a}dt \cite{Ha04}. For
$\mathbb{R}$-rank one locally symmetric spaces, we obtain a
necessary and sufficient condition for the bounded fundamental class
to vanish, which still supports the expectation.

\begin{thm}\label{thm1-3}
Let $M$ be an $n$-dimensional, $\mathbb{R}$-rank one locally symmetric space. Then,
$[\widehat{\omega}_M]=0$ in $H^n_b(M,\mathbb{R})$ if and only if the Riemannian volume form $\omega_M$ on $M$ is the differential of a bounded differential form on $M$.
\end{thm}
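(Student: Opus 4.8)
The plan is to deduce the statement from a \emph{bounded de Rham theorem} for $M$ that identifies $[\widehat\omega_M]$ with the class of $\omega_M$ in a bounded version of de Rham cohomology. The easy implication is immediate: if $\omega_M = d\beta$ with $\beta$ a bounded $(n-1)$-form, then, since geodesic simplices in a negatively curved manifold have uniformly bounded volume in every dimension (by the argument of \cite{IY81}), the formula $\psi(\tau) = \int_{Str_{n-1}(\tau)}\beta$ defines a bounded singular $(n-1)$-cochain; geodesic straightening commutes with the face operators, so $Str_*$ is a chain map with $\partial\, Str_n(\sigma) = Str_{n-1}(\partial\sigma)$, and Stokes' theorem gives $(\delta\psi)(\sigma) = \int_{\partial\, Str_n(\sigma)}\beta = \int_{Str_n(\sigma)}d\beta = \widehat\omega_M(\sigma)$, hence $[\widehat\omega_M] = 0$ (this direction uses only negative curvature bounded away from zero). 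More conceptually, let $\Omega^*_\infty(M)$ be the complex of forms that are bounded with bounded differential, with cohomology $H^*_\infty(M)$; then $\omega_M$ has a bounded primitive precisely when $[\omega_M] = 0$ in $H^n_\infty(M)$, and the computation just made shows that $\Phi(\alpha)(\sigma) = \int_{Str(\sigma)}\alpha$ defines a chain map $\Phi\colon\Omega^*_\infty(M)\to C^*_b(M,\mathbb{R})$ with $\Phi(\omega_M) = \widehat\omega_M$, so $\Phi^*[\omega_M] = [\widehat\omega_M]$. The theorem therefore reduces to showing that the vanishing of $\Phi^*[\omega_M]$ forces $[\omega_M] = 0$, for which it suffices to prove that $\Phi^*$ is an isomorphism in degree $n$ (and in a small range around it).

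To obtain this, I would pass to the universal cover $X$, a rank one symmetric space of noncompact type, and exploit the fact that, unlike $M$ itself, $X$ has bounded geometry. On $X$ one has an $\mathrm{Isom}(X)$-equivariant bounded de Rham theorem: $\Phi\colon\Omega^*_\infty(X)\to C^*_b(X,\mathbb{R})$ is a $\Gamma$-equivariant chain map, and using Cartan--Hadamard normal coordinates uniformly over $X$ one constructs a $\Gamma$-equivariant homotopy inverse whose norms are controlled by bounded geometry (equivariance being automatic because all the ingredients are natural for the isometry group). Since $X$ is also contractible, negatively curved and has positive Cheeger constant, $H^0_\infty(X) = \mathbb{R}$ and $H^k_\infty(X) = 0$ for $0 < k \le n$, so $\Omega^*_\infty(X)$, together with the equivariant homotopy data, is adapted to computing $H^*_b(\Gamma,\mathbb{R})$. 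Running the standard descent for the free proper $\Gamma$-action --- a Hochschild--Serre type comparison of the $\Gamma$-invariant subcomplexes of $\Omega^*_\infty(X)$ and $C^*_b(X,\mathbb{R})$ --- identifies $H^*_\infty(M)$ with $H^*_b(M,\mathbb{R})$ compatibly with $\Phi^*$ in the relevant degrees. Thus $[\widehat\omega_M] = \Phi^*[\omega_M] = 0$ forces $[\omega_M] = 0$ in $H^n_\infty(M)$, i.e.\ the existence of a bounded primitive of $\omega_M$.

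\textbf{The main obstacle} is precisely this descent step, and it is where the $\mathbb{R}$-rank one \emph{locally symmetric} hypothesis is indispensable: such an $M$ typically has cusps, so $\mathrm{inj}_M\to 0$, and one can neither run bounded de Rham theory directly on $M$ nor naively average a homotopy over $\Gamma$ while keeping norms finite. The hypothesis saves the argument because (i) the universal cover has bounded geometry, so the equivariant bounded de Rham and bounded Poincar\'e-lemma constructions are available on $X$, and (ii) the thick--thin decomposition of $M$ has only cusp ends, with explicitly controlled, $\mathrm{Isom}(X)$-homogeneous (hence amenable) geometry, which is what makes the comparison of $\Gamma$-invariant complexes uniform near infinity. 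As a fallback I would use the boundary model of bounded cohomology instead: $[\widehat\omega_M]$ is represented by the $\mathrm{Isom}(X)$-invariant volume cocycle on $(\partial X)^{n+1}$, its vanishing produces a $\Gamma$-invariant bounded primitive $b\in L^\infty_{\mathrm{alt}}((\partial X)^n)$, and integrating $b$ against the visual (Patterson--Sullivan) kernels yields a $\Gamma$-invariant bounded $(n-1)$-form on $X$ descending to the desired primitive on $M$; there the difficulty becomes the convergence and boundedness of that integral transform, again controlled by the rank one structure.
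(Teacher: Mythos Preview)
Your easy direction (bounded primitive $\Rightarrow$ $[\widehat\omega_M]=0$) is exactly the paper's argument.

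For the hard direction the paper does \emph{not} attempt to prove that your integration map $\Phi^*$ is an isomorphism; it only needs a map going the other way. The key input is Burger--Iozzi \cite{BI07}: for any closed subgroup $L<G$ of a connected semisimple Lie group with finite center they construct a natural map
\[
\delta^*_{\infty,L}\colon H^*_{c,b}(L,\mathbb{R})\longrightarrow H^*\bigl(\Omega^*_\infty(\mathcal{X})^L\bigr)
\]
which factors the comparison map through bounded de Rham cohomology and is compatible with restriction along $L'<L$. Applying this with $L=G$ and $L=\Gamma$ gives a commutative square; since $\delta^n_{\infty,G}([\widehat\omega_{\mathcal{X}}])=\omega_{\mathcal{X}}$ and $\pi^*[\widehat\omega_M]=res^n_b([\widehat\omega_{\mathcal{X}}])$, the vanishing of $[\widehat\omega_M]$ forces $res^n_\infty(\omega_{\mathcal{X}})=0$ in $H^n(\Omega^*_\infty(\mathcal{X})^\Gamma)$, i.e.\ a $\Gamma$-invariant bounded primitive on $\mathcal{X}$, which descends to $M$. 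That is the whole proof: one commutative square and the identification at the $G$-level.

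Compared with this, your primary plan is both stronger and unfinished. You aim for an isomorphism $H^*_\infty(M)\cong H^*_b(M,\mathbb{R})$ via a $\Gamma$-equivariant bounded homotopy inverse to $\Phi$ on $\mathcal{X}$ and a ``Hochschild--Serre type'' descent; neither step is standard, and the descent in particular is exactly where bounded cohomology fails to behave like ordinary cohomology (there is no averaging over $\Gamma$, and the complexes $\Omega^*_\infty(\mathcal{X})$, $C^*_b(\mathcal{X})$ are not known to be relatively injective $\Gamma$-modules). The paper sidesteps all of this because Burger--Iozzi's $\delta^*_{\infty,L}$ is built from the harmonic/Poisson transform on the symmetric space---precisely the semisimple structure that the rank-one locally symmetric hypothesis supplies---and naturality in $L$ replaces any descent argument. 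Your ``fallback'' via visual kernels is in fact the right idea and is morally what \cite{BI07} does; but rather than redeveloping it, you should invoke their map and its commutative diagram directly, which makes the hard implication a two-line diagram chase rather than a new theorem.
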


The question of when a differential form $\alpha$ admits a bounded
primitive has been raised by Gromov in \cite{Gr91} and \cite{Gr93}
when $\alpha$ is of degree $2$ and in \cite{Gr81} when $\alpha$ is
the volume form. In degree $2$, the existence of a bounded primitive
is strongly related to cofilling inequalities, hyperbolicity
phenomena. Gromov asserted that the existence of a bounded primitive
of the Riemannian volume form is closely related to the positivity
of the Cheeger isoperimetric constant \cite{Gr93}. In fact, Sullivan
\cite{Su76} asked whether openness at infinity of $M$ implies the
existence of a bounded primitive of its volume form. Recall that $M$
is called \textit{open at infinity} if there is a constant $C>0$
such that any compact domain $\Omega \subset M$ satisfies the
inequality $\text{Vol}(\Omega)\leq C \text{Vol}(\partial \Omega)$.
Note that a manifold with infinite volume is open at infinity if and
only if the Cheeger isoperimetric constant of the manifold is
strictly positive. Thus, a geometrically finite manifold with
infinite volume is open at infinity. From Theorems \ref{thm1-1} and
\ref{thm1-3}, we answer Sullivan's question for geometrically
finite, $\mathbb{R}$-rank one locally symmetric spaces in the
affirmative.

\begin{cor}\label{cor1-4}
Let $M$ be a geometrically finite, $\mathbb{R}$-rank one locally symmetric space
with infinite volume. Then, the Riemannian volume form on $M$ admits a bounded primitive.
\end{cor}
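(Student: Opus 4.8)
The plan is to obtain Corollary \ref{cor1-4} by simply combining Theorems \ref{thm1-1} and \ref{thm1-3}; essentially all the work has already been done, and what remains is to check that the hypotheses match up.

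First I would note that the terminology ``$\mathbb{R}$-rank one locally symmetric space'' means that $M$ is of the form $\Gamma\backslash X$ with $X$ a symmetric space of noncompact type and real rank one, so $X$ is one of $\mathbb{H}^m_{\mathbb{R}}$, $\mathbb{H}^m_{\mathbb{C}}$, $\mathbb{H}^m_{\mathbb{H}}$, or $\mathbb{H}^2_{\mathbb{O}}$. In particular $X$, and hence $M$, has sectional curvature pinched between two negative constants, so $M$ is a connected, complete Riemannian manifold with negative sectional curvature bounded away from zero. Thus the bounded cocycle $\widehat{\omega}_M$ and the bounded fundamental class $[\widehat{\omega}_M]\in H^n_b(M,\mathbb{R})$ are defined, and $M$ falls within the scope of Theorem \ref{thm1-1}.

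Since $M$ is geometrically finite and has infinite volume, Theorem \ref{thm1-1} gives $[\widehat{\omega}_M]=0$ in $H^n_b(M,\mathbb{R})$. Applying the ``only if'' direction of Theorem \ref{thm1-3}, which is available precisely because $M$ is an $\mathbb{R}$-rank one locally symmetric space, we conclude that the Riemannian volume form $\omega_M$ is the differential of a bounded differential form on $M$; that is, $\omega_M$ admits a bounded primitive in the sense recalled above. This is exactly the assertion of the corollary.

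There is no genuine obstacle in this last deduction, since the difficulty is entirely absorbed into Theorems \ref{thm1-1} and \ref{thm1-3}. The only point worth recording is the observation already made in the introduction: a geometrically finite manifold of infinite volume is open at infinity, equivalently has strictly positive Cheeger isoperimetric constant, by Hamenst\"{a}dt's theorem \cite{Ha04}. This is what makes the corollary a positive answer, for this class of spaces, to Sullivan's question of whether openness at infinity forces the volume form to admit a bounded primitive.
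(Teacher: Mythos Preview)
Your proposal is correct and matches the paper's approach exactly: the corollary is stated in the paper as an immediate consequence of Theorems \ref{thm1-1} and \ref{thm1-3} (restated internally as Theorems \ref{thm5-1} and \ref{thm6-1}), with no further argument given. Your additional remark that $\mathbb{R}$-rank one symmetric spaces have pinched negative curvature, so that Theorem \ref{thm1-1} applies, is a welcome clarification that the paper leaves implicit.
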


In the case of hyperbolic three-manifolds, we have the following
corollary from Theorem \ref{thm1-3}. This gives a proof of Gromov's
assertion in \cite{Gr93} for hyperbolic three-manifolds.

\begin{cor}\label{cor1-5}
Let $M$ be a complete hyperbolic three-manifold with infinite volume.
The following are equivalent.
\begin{itemize}
\item[(a)] $[\widehat{\omega}_M]=0$ in $H^3_b(M,\mathbb{R})$.
\item[(b)] The Cheeger isoperimetric constant of $M$ is strictly positive.
\item[(c)] The Riemannian volume form on $M$ admits a bounded primitive.
\item[(d)] $M$ is geometrically finite.
\end{itemize}
\end{cor}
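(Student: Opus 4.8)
The plan is to derive the corollary by combining Theorems \ref{thm1-1} and \ref{thm1-3} with classical facts about the isoperimetry and the bottom of the spectrum of hyperbolic $3$-manifolds, arranging the four conditions into the cycle $(d)\Rightarrow(a)\Rightarrow(c)\Rightarrow(b)\Rightarrow(d)$. The key structural observation is that real hyperbolic $3$-space is an $\mathbb{R}$-rank one symmetric space, so every complete hyperbolic $3$-manifold of infinite volume is an $\mathbb{R}$-rank one locally symmetric space to which Theorems \ref{thm1-1} and \ref{thm1-3} apply verbatim: Theorem \ref{thm1-1} gives $(d)\Rightarrow(a)$ and Theorem \ref{thm1-3} gives $(a)\Leftrightarrow(c)$, in particular $(a)\Rightarrow(c)$. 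Note that Theorem \ref{thm1-3} is genuinely needed here: one cannot simply invoke Sikorav's theorem \cite{Sir} for the equivalence of $(b)$ and $(c)$, since $M$ may have cusps and hence fail to have bounded geometry.

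For $(c)\Rightarrow(b)$ I would argue directly via Stokes' theorem: if $\omega_M=d\beta$ with $\|\beta\|_\infty=C<\infty$, then for every compact domain $\Omega\subset M$ with smooth boundary one has $\mathrm{Vol}(\Omega)=\int_\Omega d\beta=\int_{\partial\Omega}\beta\le C\,\mathrm{Vol}(\partial\Omega)$, so $M$ is open at infinity; since $\mathrm{Vol}(M)=\infty$, this is exactly the positivity of the Cheeger isoperimetric constant $h(M)$, as recalled in the introduction.

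It remains to close the cycle with $(b)\Rightarrow(d)$, which is the one genuinely geometric step. First I would pass from isoperimetry to spectrum: since $M$ has constant Ricci curvature, the Cheeger and Buser inequalities together yield $h(M)>0\iff\lambda_0(M)>0$, where $\lambda_0(M)$ is the bottom of the $L^2$-spectrum of the Laplacian on $M$. Then I would use the structure theory of ends of hyperbolic $3$-manifolds: for a topologically tame hyperbolic $3$-manifold a geometrically infinite end forces $\lambda_0(M)=0$ (Canary), while a geometrically finite $M$ of infinite volume has critical exponent $\delta(\Gamma)<2$ and hence $\lambda_0(M)>0$ by Patterson--Sullivan theory; since topological tameness is automatic for finitely generated Kleinian groups, $\lambda_0(M)>0$ is equivalent to geometric finiteness. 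Equivalently, one can directly quote the equivalence of geometric finiteness with $h(M)>0$ due to Bonahon \cite{Bo86} and Hou \cite{Ho03}. This closes the cycle, and as a byproduct it recovers Soma's theorem \cite{So97}, i.e.\ $(a)\Leftrightarrow(d)$; the implication $(d)\Rightarrow(b)$, obtained here through $(d)\Rightarrow(a)\Rightarrow(c)\Rightarrow(b)$, is also Hamenst\"{a}dt's theorem \cite{Ha04}, which serves as a consistency check.

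I expect $(b)\Rightarrow(d)$ to be the main obstacle. Unlike $(c)\Rightarrow(b)$, which is a formal consequence of Stokes, it cannot be read off from coarse geometry --- a geometrically infinite hyperbolic $3$-manifold may well have injectivity radius bounded below away from its cusps --- and it genuinely depends on the classification of ends together with the spectral comparison. A further small point to settle is the case of infinitely generated $\Gamma$, where geometric finiteness fails by definition; there one must check separately that $h(M)=0$, so that all four conditions remain equivalent.
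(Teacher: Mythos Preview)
Your proposal is correct and coincides with the paper's own argument: the paper first quotes Soma \cite{So97} for $(a)\Leftrightarrow(d)$ and Canary \cite{Ca92} for $(b)\Leftrightarrow(d)$, then uses Theorem~\ref{thm1-3} for $(a)\Leftrightarrow(c)$, but it immediately follows this with an ``alternative proof'' of Soma's result that is exactly your cycle $(d)\Rightarrow(a)\Rightarrow(c)\Rightarrow(b)\Rightarrow(d)$ via Theorems~\ref{thm1-1} and~\ref{thm1-3}, Stokes, and Canary. One small correction: for $(b)\Rightarrow(d)$ in this generality the appropriate reference is Canary \cite{Ca92}; the Bonahon--Hou equivalence, as invoked earlier in the paper, is stated only for pinched three-manifolds with positive injectivity radius and so does not cover the cusped case. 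Your concern about infinitely generated $\Gamma$ is legitimate but is not treated separately in the paper either---it simply relies on Canary's theorem as stated for complete hyperbolic three-manifolds of infinite volume.
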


\section{Bounded de Rham cohomology}

In this section, we review some results about the bounded de Rham cohomology of
symmetric spaces and then extend the results to simply-connected Riemannian
manifolds with negative sectional curvature bounded away from zero.

Let $\Omega^k(M)$ be the set of differential forms on a Riemannian manifold $M$.
For $\alpha \in \Omega^k(M)$, define the sup norm on $\Omega^k(M)$ as
$$\| \alpha \|_\infty = \sup_{x\in M} \sup_{v_1,\ldots,v_k \in T^1_xM}
|\alpha_x(v_1,\ldots,v_k)|,$$
where $T^1_x M$ is the unit tangent sphere of $M$ at $x\in M$. Consider the
space of bounded differential forms on $M$ whose differentials are also bounded,
$$\Omega^k_\infty (M)=\{ \alpha \in \Omega^k(M) \text{ }\big| \text{ }
\|\alpha \|_\infty < \infty \text{ and } \|d\alpha \|_\infty <
\infty \}.$$ By definition, the exterior differential satisfies
$d(\Omega^k_\infty(M)) \subset \Omega^{k+1}_\infty(M)$, so
$(\Omega^*_\infty(M),d)$ is a subcomplex of the standard de Rham
complex. The cohomology of this subcomplex is called \textit{bounded
de Rham cohomology}, denoted by $H^*_{bdR}(M,\mathbb{R})$.

Let $\mathcal{X}$ be a symmetric space of non-compact type of rank $r_\mathcal{X}$ and
$G$ be the identity component of the isometry group of $\mathcal{X}$.
Recently, Wienhard \cite{Wi11} showed the existence of a bounded primitive
of a closed bounded differential form on $\mathcal{X}$ with degree $k\geq r_\mathcal{X}+1$.
In particular, $G$-invariant differential forms on $\mathcal{X}$ admit
$P$-invariant bounded primitives for a minimal parabolic subgroup $P<G$.

\begin{prop}[Wienhard]
Let $\mathcal{X}$ be a symmetric space of non-compact type and
$G$ the connected component of the group of isometries. Let $\alpha$
be a $G$-invariant differential form on $\mathcal{X}$, then there exists a bounded
differential form $\beta$ with $d\beta =\alpha$. Moreover, $\beta$ can be
chosen to be $P$-invariant for a minimal parabolic subgroup $P<G$.
\end{prop}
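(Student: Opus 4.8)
The plan is to transfer the problem from the symmetric space $\mathcal{X} = G/K$ to the solvable Iwasawa group and reduce it to a computation in Lie algebra cohomology. Fix an Iwasawa decomposition $G = KAN$ and put $S = AN$, a solvable Lie group acting simply transitively on $\mathcal{X}$. The orbit map $\phi\colon S\to\mathcal{X}$, $s\mapsto s\cdot o$ (with $o = eK$), is a diffeomorphism that intertwines left translations on $S$ with the $S$-action on $\mathcal{X}$; since the Riemannian metric of $\mathcal{X}$ is $G$-invariant, $\phi$ pulls it back to a left-invariant metric on $S$. First I would observe that $\widetilde\alpha := \phi^*\alpha$ is closed ($G$-invariant forms on a symmetric space of non-compact type are automatically closed) and left-$S$-invariant, hence corresponds to a closed element $\alpha_0$ of the Chevalley--Eilenberg complex $(\Lambda^\bullet\mathfrak{s}^*,d)$, where $\mathfrak{s} = \mathfrak{a}\oplus\mathfrak{n}$.

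The heart of the argument is to show that $[\alpha_0] = 0$ in $H^k(\mathfrak{s},\mathbb{R})$, $k = \deg\alpha \geq 1$. For this I would use the weight decomposition of $\Lambda^\bullet\mathfrak{s}^*$ under $\mathrm{ad}^*(\mathfrak{a})$: one has $\mathfrak{s}^* = \mathfrak{a}^*\oplus\mathfrak{n}^*$ with $\mathfrak{a}^*$ the zero-weight space and the weights on $\mathfrak{n}^*$ equal to $\{-\beta : \beta\in\Sigma^+\}$, so every weight occurring on $\Lambda^j\mathfrak{n}^*$ with $j\geq 1$ is a nonzero sum of positive restricted roots, hence nonzero. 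Since $\mathfrak{a}$ acts trivially on $H^\bullet(\mathfrak{s})$ (the Lie derivative $L_X = \iota_X d + d\iota_X$ sends cocycles to coboundaries), all subcomplexes of $\Lambda^\bullet\mathfrak{s}^*$ of nonzero weight are acyclic, so the inclusion of the zero-weight part induces an isomorphism $\Lambda^\bullet\mathfrak{a}^* = H^\bullet(\Lambda^\bullet\mathfrak{a}^*,0)\xrightarrow{\ \cong\ } H^\bullet(\mathfrak{s})$. Under this identification $[\alpha_0]$ is represented by the restriction $\alpha_0|_{\Lambda^k\mathfrak{a}}\in\Lambda^k\mathfrak{a}^*$, i.e.\ by the restriction of $\alpha$ to the maximal flat through $o$. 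If $\deg\alpha > r_\mathcal{X} = \dim\mathfrak{a}$ this vanishes for dimension reasons; in general, $G$-invariance of $\alpha$ forces $\alpha_0|_{\Lambda^\bullet\mathfrak{a}}$ to be invariant under the finite Weyl group $W = N_K(\mathfrak{a})/Z_K(\mathfrak{a})$ acting on $\mathfrak{a}$, and $(\Lambda^k\mathfrak{a}^*)^W = 0$ for every $k\geq 1$. Hence $\alpha_0 = d\beta_0$ for some $\beta_0\in\Lambda^{k-1}\mathfrak{s}^*$.

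To conclude, $\beta_0$ determines a left-$S$-invariant $(k-1)$-form $\beta$ on $S\cong\mathcal{X}$ with $d\beta = \alpha$; being left-invariant for the metric above, $\beta$ has constant pointwise norm and is therefore bounded. For the $P$-invariance, write $P = MAN$ with $M = Z_K(\mathfrak{a})$, so $P = M\ltimes S$ and $M$ fixes $o$ and acts on $\mathcal{X}\cong S$ by the conjugation automorphisms $c_m$, $s\mapsto msm^{-1}$. Since $M$ is compact and $\alpha$ is $M$-invariant, I would replace $\beta_0$ by its average $\int_M \mathrm{Ad}(m)^*\beta_0\,dm$; this is still a Chevalley--Eilenberg primitive of $\alpha_0$ (as $\mathrm{Ad}(m)$ is a Lie algebra automorphism and $\alpha_0$ is $\mathrm{Ad}(M)$-invariant) and is now $\mathrm{Ad}(M)$-invariant, so the corresponding $\beta$ satisfies $c_m^*\beta = \beta$ for all $m\in M$. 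Together with left-$S$-invariance and $P = M\ltimes S$ this gives $p^*\beta = \beta$ for all $p\in P$, so $\beta$ is the desired bounded $P$-invariant primitive. (Note a posteriori that $P$-invariance already forces boundedness, since the metric is $P$-invariant and $P$ is transitive.)

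The step I expect to be the main obstacle is the vanishing of the cohomology class, that is, showing that a $G$-invariant form becomes exact in the Chevalley--Eilenberg complex of $\mathfrak{s}$. The degree count $\deg\alpha > r_\mathcal{X}$ already covers the Riemannian volume form, which is the only case needed elsewhere in this paper; but the general statement genuinely requires the computation $H^\bullet(\mathfrak{s})\cong\Lambda^\bullet\mathfrak{a}^*$ together with the absence of Weyl-invariant exterior forms of positive degree. Everything downstream --- boundedness from constancy of the norm, and $P$-invariance by averaging over the compact group $M$ --- is routine.
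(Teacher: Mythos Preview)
Your argument is correct, but it is not the route taken in the paper. The paper does not reprove Wienhard's proposition; it quotes it and then, in Proposition~\ref{pro2-2}, adapts Wienhard's \emph{analytic} method to the rank-one variable-curvature setting. That method fixes a point $\xi$ in the boundary at infinity (in higher rank, a Weyl chamber at infinity), lets $\phi^\xi_t$ be the gradient flow of the associated Busemann function, shows that $(\phi^\xi_t)^*\alpha$ decays exponentially in $t$ (via contraction of the stable Jacobi fields, i.e.\ the positive restricted roots), and sets
\[
\beta \;=\; i_{V_\xi}\!\left(-\int_0^\infty (\phi^\xi_t)^*\alpha\,dt\right),
\]
which is bounded by the decay estimate and $P$-invariant because $V_\xi$ and $\phi^\xi_t$ are. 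Your approach is instead purely algebraic: pass to the Chevalley--Eilenberg complex of $\mathfrak{s}=\mathfrak{a}\oplus\mathfrak{n}$, use the $\mathfrak{a}$-weight decomposition to identify $H^\bullet(\mathfrak{s})\cong\Lambda^\bullet\mathfrak{a}^*$, and kill the class by the observation that $(\Lambda^k\mathfrak{a}^*)^W=0$ for $k\geq 1$ (which follows since $\iota_{\alpha^\vee}\omega=0$ for every coroot forces $\omega=0$). The two arguments are cousins---your contracting homotopy $\mu(H)^{-1}\iota_H$ on nonzero weight spaces is the infinitesimal shadow of Wienhard's integral along the $A$-flow---but they are organized very differently.

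What each buys: your algebraic route cleanly separates the two issues (exactness is a finite-dimensional computation, boundedness is immediate from left-$S$-invariance) and makes transparent why $G$-invariance removes the degree hypothesis $k\geq r_{\mathcal X}+1$ that is otherwise needed: the obstruction lives in $(\Lambda^k\mathfrak{a}^*)^W$, which already vanishes for $k\geq 1$. Wienhard's flow argument, on the other hand, applies to \emph{any} closed bounded form (not just $G$-invariant ones) once $k>r_{\mathcal X}$, and transfers verbatim to pinched negative curvature, which is exactly how the paper exploits it in Proposition~\ref{pro2-2} and Corollary~\ref{cor2-4}. Your averaging over $M=Z_K(\mathfrak{a})$ to upgrade $S$-invariance to $P$-invariance is correct and is the standard compact-group trick; in the flow picture this step is unnecessary because the primitive is $P$-invariant by construction.
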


Let $X$ be a complete, simply connected, Riemannian manifold
with negative sectional curvature bounded away from zero. Denote by $G$ the isometry group of $X$.
Let $d_X$ be the $G$-invariant Riemannian distance function on $X$.
The visual boundary $\partial X$ of $X$ is
defined as the set of asymptotic classes of geodesic rays. Let $\xi \in \partial X$ and
$\gamma : \mathbb{R} \rightarrow X$ a geodesic representing $\xi$. The Busemann function
$B_\xi : X\times X \rightarrow \mathbb{R}$ is defined by
$$B_\xi (x,y)=\lim_{t\rightarrow \infty}(d_X(x,\gamma (t))-d_X(y,\gamma (t))).$$

Let's set $B^0_\xi (x)=B_\xi (x,x_0)$ for a point $x_0\in X$.
The level sets of $B^0_\xi$ are horospheres centered at $\xi$.
Let $V_\xi = -\text{grad} B^0_\xi$ be the negative gradient vector field of $B^0_\xi$ and
$\phi^\xi_t :X\rightarrow X$ the corresponding flow. Then, $\|V_\xi \| =1$. Furthermore,
$V_\xi$ and $\phi^\xi_t$ are independent of the choice of the geodesic $\gamma$ representing
$\xi$.
By a similar argument in the proof of \cite{Wi11},
we can prove the following proposition.

\begin{prop}\label{pro2-2}
Let $X$ be a complete, simply connected, Riemannian manifold
with negative sectional curvature bounded away from zero and $\alpha$ a closed bounded smooth
differential form of degree $k$ on $X$. If $k\geq 2$, there exists
a bounded $(k-1)$-form $\beta$ on $X$ with $d\beta = \alpha$.
\end{prop}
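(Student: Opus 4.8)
The plan is to imitate Wienhard's argument, replacing the role of the minimal parabolic in a symmetric space by the Busemann flow associated to a single boundary point $\xi \in \partial X$. Fix any $\xi \in \partial X$, and let $\phi^\xi_t$ be the flow of $V_\xi = -\mathrm{grad}\, B^0_\xi$ as in the text. Since the curvature is bounded away from zero (say by $-b^2 \le K \le -a^2 < 0$), the horospheres are exponentially contracted under the forward flow: there are constants $C>0$ and $\lambda>0$ (with $\lambda$ depending only on $a$) such that $\| (\phi^\xi_t)_* v \| \le C e^{-\lambda t}\|v\|$ for every vector $v$ tangent to a horosphere and every $t \ge 0$. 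This is the standard Rauch/Jacobi-field estimate on a Hadamard manifold with pinched negative curvature, and it is the crucial geometric input; everything else is formal.

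Next I would define $\beta$ by integrating the pulled-back form along the flow. Concretely, set
\[
\beta = -\int_0^\infty (\phi^\xi_t)^* (\iota_{V_\xi}\alpha)\, dt,
\]
where $\iota_{V_\xi}$ denotes contraction with $V_\xi$. The point is that $\frac{d}{dt}(\phi^\xi_t)^*\alpha = (\phi^\xi_t)^*\mathcal{L}_{V_\xi}\alpha = (\phi^\xi_t)^* (d\iota_{V_\xi}\alpha + \iota_{V_\xi}d\alpha) = d\bigl((\phi^\xi_t)^*\iota_{V_\xi}\alpha\bigr)$, using Cartan's formula and $d\alpha = 0$. Integrating from $0$ to $\infty$ and using that $(\phi^\xi_t)^*\alpha \to 0$ as $t \to \infty$ (which follows from the contraction estimate, since in the "horospherical" directions the form decays and the flow direction $V_\xi$ is killed after one contraction), we get $d\beta = \alpha - \lim_{t\to\infty}(\phi^\xi_t)^*\alpha = \alpha$. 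The hypothesis $k \ge 2$ is what guarantees that $\iota_{V_\xi}\alpha$ involves at least one horospherical direction, so that $(\phi^\xi_t)^*\iota_{V_\xi}\alpha$ — and hence the integrand — decays like $e^{-\lambda t}$, making the integral converge absolutely in sup norm with $\|\beta\|_\infty \le \frac{C}{\lambda}\|\alpha\|_\infty$.

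So the key steps, in order, are: (1) record the exponential horosphere-contraction estimate from pinched negative curvature; (2) check that $\iota_{V_\xi}\alpha$, which a priori has $k-1 \ge 1$ slots all of which must be fed horospherical or radial vectors, satisfies $\|(\phi^\xi_t)^*(\iota_{V_\xi}\alpha)\|_\infty \le C' e^{-\lambda t}\|\alpha\|_\infty$ — here one writes an arbitrary unit frame at $\phi^\xi_t(x)$ as a combination of $V_\xi$ and horospherical vectors, pushes forward, notes the $V_\xi$-component contributes nothing after contraction by $V_\xi$ (since $\iota_{V_\xi}\iota_{V_\xi} = 0$), and the remaining vectors are contracted; (3) conclude absolute convergence of the integral and boundedness of $\beta$; (4) run the Cartan-formula computation to get $d\beta = \alpha$, justifying differentiation under the integral sign and the vanishing of the boundary term at $t = \infty$. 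I expect step (2) — getting the decay of the \emph{pulled-back contracted form} uniformly over $X$, carefully handling the radial direction and the smoothness of $B^0_\xi$ (which is $C^\infty$ on $X$ since $X$ is Hadamard) — to be the main technical obstacle, though it is entirely parallel to the estimate in \cite{Wi11}. One should also note that $\beta$ is automatically smooth because $\alpha$, $V_\xi$, and $\phi^\xi_t$ are smooth and the convergence is locally uniform in all derivatives by the same exponential estimates applied to covariant derivatives of $\alpha$.
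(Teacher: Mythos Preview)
Your proof is correct and is essentially the paper's argument with the order of two operations swapped: the paper first integrates to form $\alpha' = -\int_0^\infty (\phi^\xi_t)^*\alpha\,dt$ and then contracts to set $\beta = i_{V_\xi}\alpha'$, whereas you contract first and then integrate. Since $V_\xi$ is invariant under its own flow, $i_{V_\xi}$ commutes with $(\phi^\xi_t)^*$, so the two $\beta$'s coincide; your Cartan-formula computation $d\beta = \alpha - \lim_{t\to\infty}(\phi^\xi_t)^*\alpha$ is exactly the paper's identity $\alpha = \mathcal{L}_{V_\xi}\alpha' = d(i_{V_\xi}\alpha') + i_{V_\xi}(d\alpha')$ unpacked, and the decay estimate you sketch (decompose each input vector into radial plus horospherical, use alternation to kill terms with two radial components) is the paper's estimate $|(\phi^\xi_t)^*\alpha(v_1,\dots,v_k)| \le e^{-(k-1)t}(k+e^{-t})\|\alpha\|_\infty$.
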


\begin{proof}
We can assume that $X$ has all sectional curvatures at most $-1$ by scaling the metric.
Let $\xi$ be a point in $\partial X$. Let's define $\gamma_x :\mathbb{R} \rightarrow X$
for every $x\in X$ by $\gamma_x(t)=\phi^\xi_t(x)$. Note that $\gamma_x$ is the unique unit
speed geodesic passing through $x=\gamma_x(0)$, which is asymtotic to $\xi$ at infinity.
If a vector $v\in T_xX$ is orthogonal to $V_\xi$, then $(\phi^\xi_t)_*(v)$ is a Jacobi field
of geodesic variation by geodesic $\gamma_x$ that is perpendicular to $V_\xi$
and it decays uniformly exponentially. Thus, we have
\begin{eqnarray}\label{jac}
\| (\phi^\xi_t)_*(v) \| \leq e^{-t} \| v \|.
\end{eqnarray}

Let $v_1,\ldots,v_k \in T^1_x X$.
Each $v_i$ can be uniquely written by $v_i=v_i^P + v_i^T$,
where $v_i^P$ is a vector perpendicular to $V_\xi$ and $v_i^T$ is a
vector tangent to $V_\xi$.
Then, $(\phi^\xi_t)_*(v_i^P)$ is a Jacobi field along the geodesic
$\gamma_x$, which is perpendicular to $V_\xi$ for each $i=1,\ldots,k$, and so
it satisfies Equation (\ref{jac}).

Consider the set $S_i$ of all $i$-combinations of
a set $\{1,\ldots,k \}$ for each $i=1,\cdots,k$. Recall that a $i$-combination of the set $\{1,\ldots,k \}$ is
a subset of $i$ distinct elements of $\{1,\ldots,k \}$. Let's set
$$Q_i = \bigcup_{B\in S_i}\{ e=(e_1,\ldots,e_k) \text{ }|\text{ } e_j = v_j^P\text{ if }j\in
B\text{ and }e_j=v_j^T\text{ otherwise} \},$$ for each $i=1,\ldots,k$.
If $k\geq 2$, we have
{\setlength\arraycolsep{2pt}
\begin{eqnarray*}
|(\phi^\xi_t)^* \alpha(v_1,\ldots,v_k)| &=& |(\phi^\xi_t)^* \alpha
(v_1^P + v_1^T,\ldots,v_k^P + v_k^T)| \\
&=& \Big| \sum_{i=0}^k \sum_{e\in Q_i} \alpha((\phi^\xi_t)_* e) \Big| \\
&=& \Big| \sum_{e\in Q_{k-1}} \alpha((\phi^\xi_t)_* e) + \sum_{e\in Q_k}
\alpha((\phi^\xi_t)_* e) \Big| \\
&\leq & e^{-(k-1)t} \binom{k}{k-1} \|\alpha\|_\infty +e^{-kt} \binom{k}{k} \|\alpha\|_\infty \\
&= & e^{-(k-1)t} (k+e^{-t}) \| \alpha \|_\infty.
\end{eqnarray*}}
The third equation follows from $\alpha((\phi^\xi_t)_* e)=0$ for $e\in
Q_i$ if $i<k-1$. In fact, if $e\in Q_i$ for $i<k-1$, at least two
vectors in the coordinate of $(\phi^\xi_t)_* e$ are tangent to $V_\xi$. Thus,
$\alpha((\phi^\xi_t)_* e)=0$ because $\alpha$ is alternating.
From the above inequality, the integral
$$\alpha ' = -\int^\infty_0 (\phi^\xi_t)^* \alpha dt$$
is well-defined and, moreover, $\alpha '$ is a bounded differential form on $X$.
By the construction of $\alpha '$, we have $\mathfrak{L}_{V_\xi}\alpha '=\alpha$ and $d\alpha '
=0$,
where $\mathfrak{L}_{V_\xi}$ denotes the Lie derivative in the direction of $V_\xi$. We set
$ \beta = i_{V_\xi} \alpha ',$
where $i_{V_\xi}$ is the contraction of $\alpha '$ with the vector field $V_\xi$. Then
$$\alpha = \mathfrak{L}_{V_\xi} \alpha ' = d(i_{V_\xi}\alpha ')+ i_{V_\xi}(d \alpha ') = d
\beta.$$
Since $\alpha '$ is bounded and $V_\xi$ is a unit vector,
{\setlength\arraycolsep{2pt}
\begin{eqnarray*}
|\beta(v_1,\ldots,v_{k-1}) | &=& |i_{V_\xi}\alpha '(v_1,\ldots,v_{k-1}) | \\
&=& |\alpha '(V_\xi,v_1,\ldots,v_{k-1}) | \\
&\leq & \| \alpha ' \|_\infty \| V_\xi \| \|v_1\|\cdots \|v_{k-1} \| \\
&\leq & \| \alpha ' \|_\infty.
\end{eqnarray*}}
Therefore, we can conclude that $\beta$ is bounded.
\end{proof}

From Proposition \ref{pro2-2}, we can obtain an immediate corollary about
the bounded de Rham cohomology of $X$ as follows.

\begin{cor}\label{cor2-3}
Let $\alpha \in \Omega^k_\infty(X,\mathbb{R})$ be a closed bounded form for $k\geq 2$.
Then $[\alpha]=0$ in $H^k_{bdR}(X,\mathbb{R})$.
In particular, $H^k_{bdR}(X,\mathbb{R})=0$ for $k\geq 2$.
\end{cor}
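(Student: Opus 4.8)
The plan is to read this off directly from Proposition \ref{pro2-2}. Fix $k\geq 2$ and let $\alpha\in\Omega^k_\infty(X,\mathbb{R})$ be closed. Being closed means $d\alpha=0$, so $\alpha$ is in particular a closed bounded smooth $k$-form, and Proposition \ref{pro2-2} applies: there is a bounded $(k-1)$-form $\beta$ on $X$ with $d\beta=\alpha$. The one extra point I need to check is that $\beta$ actually lies in the subcomplex $\Omega^{k-1}_\infty(X)$, i.e. that $d\beta$ is also bounded; but $d\beta=\alpha$ and $\|\alpha\|_\infty<\infty$ by hypothesis, so indeed $\|d\beta\|_\infty=\|\alpha\|_\infty<\infty$ and $\beta\in\Omega^{k-1}_\infty(X)$. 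Hence $[\alpha]=[d\beta]=0$ in $H^k_{bdR}(X,\mathbb{R})$.

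For the final assertion I would simply observe that every class in $H^k_{bdR}(X,\mathbb{R})$ with $k\geq 2$ is represented by a closed form in $\Omega^k_\infty(X,\mathbb{R})$, and the previous step shows each such class vanishes; therefore $H^k_{bdR}(X,\mathbb{R})=0$.

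I do not expect any real obstacle here: the substance is entirely contained in Proposition \ref{pro2-2}, and the corollary is just the remark that the primitive constructed there automatically has bounded exterior derivative (namely $\alpha$ itself), so it is already a cochain of the bounded de Rham complex. If one wishes to be scrupulous, one should also note that $\beta=i_{V_\xi}\alpha'$ is smooth, which reduces to the smoothness of the horospherical flow $\phi^\xi_t$ and the permissibility of differentiating $\alpha'=-\int_0^\infty(\phi^\xi_t)^*\alpha\,dt$ under the integral sign; but this is already implicit in the proof of Proposition \ref{pro2-2} and needs nothing new.
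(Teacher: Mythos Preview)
Your proposal is correct and matches the paper's approach exactly: the paper states this as an immediate corollary of Proposition~\ref{pro2-2} without further proof, and your write-up simply makes explicit the one observation needed, namely that the bounded primitive $\beta$ produced there has $d\beta=\alpha$ bounded and hence lies in $\Omega^{k-1}_\infty(X)$.
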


Let $P$ be a parabolic subgroup of $G$, that is, the stabilizer group of
a point $\xi$ in $\partial X$.
In particular, $G$-invariant closed bounded forms on $X$ admit
$P$-invariant bounded primitives.

\begin{cor}\label{cor2-4}
Let $\alpha \in \Omega^k_\infty(X,\mathbb{R})^G$ be a $G$-invariant closed bounded
form for $k\geq 2$.
Then there exists a $P$-invariant differential form
$\beta \in \Omega^{k-1}_\infty (X,\mathbb{R})^P$ such that $d \beta =\alpha$
for a parabolic subgroup $P$ of $G$.
\end{cor}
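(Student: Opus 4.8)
The plan is to notice that the bounded primitive $\beta = i_{V_\xi}\alpha'$ built in the proof of Proposition \ref{pro2-2} is canonically attached to the chosen boundary point $\xi \in \partial X$, so that taking $\xi$ to be the point whose stabilizer in $G$ is $P$ automatically makes the whole construction $P$-equivariant. Thus the corollary should drop out of the proof of Proposition \ref{pro2-2} together with a short equivariance check, and I do not expect any genuinely new obstacle here.

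First I would record that $V_\xi = -\text{grad}\,B^0_\xi$ and the flow $\phi^\xi_t$ depend only on $\xi$ (the basepoint constant in $B^0_\xi$ disappears under the gradient, and $V_\xi$, $\phi^\xi_t$ are independent of the representing geodesic, as noted before Proposition \ref{pro2-2}). Hence every $g$ in $P = \text{Stab}_G(\xi)$ satisfies $g_* V_\xi = V_\xi$ and therefore commutes with the flow, $g \circ \phi^\xi_t = \phi^\xi_t \circ g$ for all $t$. Then I would rerun the construction of Proposition \ref{pro2-2} with this $\xi$, getting $\alpha' = -\int_0^\infty (\phi^\xi_t)^*\alpha\,dt$ and $\beta = i_{V_\xi}\alpha'$; Proposition \ref{pro2-2} already guarantees $\beta$ is bounded with $d\beta = \alpha$, and since $\alpha$ is bounded this gives $\beta \in \Omega^{k-1}_\infty(X,\mathbb{R})$.

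For the equivariance, fix $g \in P$. Using $g^*\alpha = \alpha$ ($G$-invariance) and $\phi^\xi_t \circ g = g \circ \phi^\xi_t$ one finds $g^*\big((\phi^\xi_t)^*\alpha\big) = (\phi^\xi_t)^*\alpha$ for every $t$; pulling $g^*$ through the absolutely convergent integral (legitimate because $g$ is an isometry, so the exponential-decay bounds of Proposition \ref{pro2-2} are untouched) gives $g^*\alpha' = \alpha'$. Finally, naturality of the contraction together with $g_* V_\xi = V_\xi$ yields $g^*\beta = i_{V_\xi}(g^*\alpha') = i_{V_\xi}\alpha' = \beta$, so $\beta$ is $P$-invariant. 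The only steps needing care are this interchange of $g^*$ with the improper integral and the naturality of interior multiplication under a diffeomorphism preserving the contracting field; both are routine, so I expect the main ``difficulty'' to be purely bookkeeping. If one wants to be scrupulous, one should also note that $V_\xi$ is regular enough for all of this to make sense, but that is precisely the regularity point already suppressed in Proposition \ref{pro2-2}.
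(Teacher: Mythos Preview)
Your proposal is correct and follows exactly the same route as the paper: take $P=\mathrm{Stab}_G(\xi)$, observe that $V_\xi$ and the flow $\phi^\xi_t$ are $P$-invariant, and conclude that the primitive $\beta=i_{V_\xi}\big(-\int_0^\infty(\phi^\xi_t)^*\alpha\,dt\big)$ from Proposition~\ref{pro2-2} is $P$-invariant. The paper's proof simply asserts this in one sentence, whereas you spell out the equivariance check in detail, but the argument is the same.
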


\begin{proof}
Let $P=Stab_G(\xi)$ for a point $\xi$ in $\partial X$.
Since $\alpha$ is $G$-invariant, it is $P$-invariant. Furthermore, $V_\xi$ and $\phi^\xi_t$ are
$P$-invariant;
thus, the differential form
$$\beta=i_{V_\xi} \Big(-\int^\infty_0 (\phi^\xi_t)^*\alpha dt \Big)$$
constructed in the proof of Proposition \ref{pro2-2} is also a $P$-invariant differential form on
$X$.
\end{proof}

Note that the Riemannian volume form $\omega_X$ on $X$ is a $G$-invariant closed bounded form.
Thus, it admits a $P$-invariant bounded primitive for a parabolic subgroup $P<G$
by Corollary \ref{cor2-4}.

\section{Bounded fundamental class}

Let $M$ be an $n$-dimensional complete Riemannian manifold with
negative sectional curvature bounded away from zero.
Let $\omega_M$ be the Riemannian volume form on $M$. Recall that de Rham isomorphism $\Psi^* :
H^k_{dR}(M)\rightarrow H^k(M,\mathbb{R})$ is induced by
$$\Psi(\alpha)(\sigma)=\alpha (\sigma) =\int_\sigma \alpha,$$ where
$\alpha$ is a $k$-form on $M$ and $\sigma$ is a smooth $k$-simplex
in $M$. The volume class $[\omega_M]$ of $M$ in $H^n(M;\mathbb{R})$
is the cohomology class determined by the cocycle $\sigma\ra
\omega_M(\sigma)$. Furthermore, we have a cocycle
$\widehat{\omega}_M$ defined by
$$\widehat{\omega}_M (\sigma)=\omega_M (Str_n(\sigma))=\int_{Str_n(\sigma)}\omega_M$$
for any singular simplex $\sigma : \Delta^n \rightarrow M$, where
$Str_* :C_*(M,\mathbb{R})\rightarrow C_*(M,\mathbb{R})$ is the geodesic straightening map
introduced by Thurston \cite{Th78}.
Since the volume of geodesic simplices in $M$ is bounded from above \cite{IY81},
the cocycle $\widehat{\omega}_M$ is bounded, and so it determines a bounded
cohomology class in $H^n_b(M,\mathbb{R})$.

\begin{lemma} \label{lem4-1}
Let $M$ be a topological space and $C_*(M,\mathbb{R})$ be the singular chain complex of $M$.
If $\psi_* :C_*(M,\mathbb{R})\rightarrow C_*(M,\mathbb{R})$ is a chain map homotopic to the identity,
a cocycle $z:C_*(M,\mathbb{R})\rightarrow \mathbb{R}$ is cohomologous to $z\circ \psi_*$.
\end{lemma}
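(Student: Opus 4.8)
This is the standard homological fact that chain-homotopic chain maps induce the same map on cohomology, specialized to the case where one of the maps is the identity. The plan is to unwind the definition of a chain homotopy and verify that $z - z\circ\psi_*$ is a coboundary. Concretely, let $h_* : C_*(M,\mathbb{R}) \to C_{*+1}(M,\mathbb{R})$ be a chain homotopy between the identity and $\psi_*$, so that
$$
\partial_{*+1}\circ h_* + h_{*-1}\circ\partial_* = \mathrm{id} - \psi_*
$$
on $C_*(M,\mathbb{R})$. Given a cocycle $z : C_n(M,\mathbb{R}) \to \mathbb{R}$, I would simply compute, for an arbitrary chain $c \in C_n(M,\mathbb{R})$,
$$
(z - z\circ\psi_n)(c) = z\bigl(\partial_{n+1} h_n(c)\bigr) + z\bigl(h_{n-1}\partial_n(c)\bigr).
$$
The first term equals $(z\circ\partial_{n+1})(h_n(c))$, which vanishes because $z$ is a cocycle, i.e. $z\circ\partial_{n+1} = 0$. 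Hence $z - z\circ\psi_n = (z\circ h_{n-1})\circ\partial_n = \delta(z\circ h_{n-1})$, where $\delta$ denotes the coboundary operator on cochains. Therefore $z$ and $z\circ\psi_*$ differ by the coboundary of the cochain $z\circ h_{n-1}$, so they represent the same class in cohomology.

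The key steps, in order, are: (1) record that $\psi_*$ being homotopic to the identity yields a chain homotopy $h_*$ with the displayed identity; (2) apply this identity to an arbitrary $n$-chain, pair with $z$, and split into the two resulting terms; (3) kill the $\partial h$ term using the cocycle condition on $z$; (4) recognize the surviving $h\partial$ term as the coboundary $\delta(z\circ h_{n-1})$, and conclude. Since the statement is purely algebraic and makes no use of the topology of $M$ beyond the chain complex structure, no analytic input is needed.

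There is essentially no obstacle here; the only point requiring a modicum of care is bookkeeping the sign conventions in the definition of chain homotopy (whether the homotopy identity reads $\partial h + h\partial = \mathrm{id} - \psi$ or with the roles reversed) and making sure the coboundary operator is applied with the matching convention, so that the cancellation is exact. With a consistent convention fixed at the outset, the computation is immediate.
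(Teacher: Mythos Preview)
Your proposal is correct and follows essentially the same argument as the paper: write down the chain homotopy identity, pair with $z$, kill the $\partial h$ term using the cocycle condition, and recognize the remaining $h\partial$ term as $\delta(z\circ h_{n-1})$. The only difference is a harmless sign convention (you take $\partial h + h\partial = \mathrm{id} - \psi$ whereas the paper takes $\psi - \mathrm{id}$), which you already flagged.
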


\begin{proof}
Let $H_* :C_*(M,\mathbb{R})\rightarrow C_{*+1}(M,\mathbb{R})$ be the chain homotopy between $\psi_*$ and
the identity on $C_*(M,\mathbb{R})$, that is,
$$\partial_{k+1}H_k + H_{k-1}\partial_k = \psi_k- id.$$
Given a $k$-cocycle $z:C_k(M,\mathbb{R})\rightarrow \mathbb{R}$, we have
{\setlength\arraycolsep{2pt}
\begin{eqnarray*}
z\circ \psi_k (\sigma) - z(\sigma) &=& z \circ (\psi_k -id) (\sigma) \\
&=& z \circ (\partial_{k+1}H_k + H_{k-1}\partial_k) (\sigma) \\
&=& z(\partial_{k+1}(H_k(\sigma)))+z\circ H_{k-1}(\partial_k \sigma) \\
&=& \delta (z\circ H_{k-1})(\sigma),
\end{eqnarray*}}
where $\delta$ is the coboundary operator on the singular cochain complex $C^*(M,\mathbb{R})$.
Thus, $z$ and $z\circ \psi_k$ represent the same cohomology class in $H^k(M,\mathbb{R})$.
\end{proof}

Since the geodesic straightening map is chain homotopic to the identity,
it follows from Lemma \ref{lem4-1} that $\omega_M
(\cdot)$ and $\widehat{\omega}_M = \omega_M \circ Str_n (\cdot)$
represent the same cohomology class in $H^n(M,\mathbb{R})$. In other words,
$[\widehat{\omega}_M]$ is a bounded cohomology class in $H^n_b(M,\mathbb{R})$
representing the volume class $[\omega_M]$ in $H^n(M,\mathbb{R})$.

\section{Negatively curved manifolds}\label{negative}

In this section, we study the bounded fundamental class of pinched negatively
curved manifolds. Let $X$ be an $n$-dimensional, simply-connected, complete Riemannian manifold
with pinched negative sectional curvature.
Suppose that $\Gamma$ is a discrete subgroup of the isometry group $G$ of $X$.
Denote the limit set of $\Gamma$ by $\Lambda_\Gamma$.
The convex hull $CH(\Lambda_\Gamma)$ of the limit set is $\Gamma$-invariant.
The convex core $\mathcal{C}_M$ of $M=\Gamma\backslash X$ is defined by
$\Gamma \backslash CH(\Lambda_\Gamma).$
If $\Gamma$ is not elementary, the convex core $\mathcal{C}_M$ is
the smallest convex submanifold of $M$ such that the inclusion map of $\mathcal{C}_M$ into $M$
is a homotopy equivalence.
Then, $M$ is called \textit{geometrically finite} if the volume
of the one-neighborhood $\mathcal{N}_1(\mathcal{C}_M)$ of
$\mathcal{C}_M$ in $M$ is finite. For a discussion of several equivalent definitions
of geometrically finiteness, we refer the reader to \cite{Bo95}.

\begin{thm}\label{thm5-1}
Let $M$ be an $n$-dimensional, pinched negatively curved
geometrically finite manifold with infinite volume. Then,
$[\widehat{\omega}_M]=0$ in $H_b^n(M,\mathbb{R})$.
\end{thm}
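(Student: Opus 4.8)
The plan is to lift the problem to the universal cover and use the bounded de Rham machinery built in Section 2, combined with the geometric structure of a geometrically finite manifold as a compact core with finitely many cusp ends. First I would recall from Corollary \ref{cor2-4} (or the remark following it) that on the universal cover $X$ the $G$-invariant Riemannian volume form $\omega_X$ admits a bounded primitive $\beta_0 \in \Omega^{n-1}_\infty(X)$. The issue is that $\beta_0$ is only $P$-invariant for a single parabolic $P = \mathrm{Stab}_G(\xi)$, hence in general does not descend to $M = \Gamma\backslash X$; the whole point of geometric finiteness is that one can patch together finitely many such locally defined bounded primitives, one per cusp, into a globally defined (smooth, bounded) primitive of $\omega_M$ on $M$. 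Once such a bounded primitive $\beta$ on $M$ is produced, de Rham's theorem gives a singular cochain $\Psi(\beta)$ with $\delta\Psi(\beta) = \Psi(\omega_M) = \omega_M(\cdot)$, and since $\beta$ is bounded and smooth simplices have uniformly bounded volume (via \cite{IY81}), $\Psi(\beta)$ is a bounded cochain; hence the volume cocycle $\omega_M(\cdot)$ is the coboundary of a bounded cochain, so by Lemma \ref{lem4-1} the cohomologous cocycle $\widehat\omega_M$ also represents $0$ in $H^n_b(M,\mathbb{R})$.

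The technical heart is the construction of the global bounded primitive $\beta$ on $M$. By geometric finiteness, $M$ decomposes as $M = M_0 \cup E_1 \cup \cdots \cup E_m$, where $M_0$ is a compact submanifold with boundary and each $E_i$ is a cusp neighborhood; more precisely, the thin part of $M$ corresponding to the parabolic subgroups can be described via the Margulis lemma. I would work as follows. On each cusp end $E_i$, lift to a horoball $H_i \subset X$ based at a parabolic fixed point $\xi_i$; the $P_i$-invariant bounded primitive $\beta_i' := i_{V_{\xi_i}}\bigl(-\int_0^\infty (\phi^{\xi_i}_t)^*\omega_X\, dt\bigr)$ from Corollary \ref{cor2-4} is invariant under the parabolic subgroup $\Gamma_i = \Gamma \cap P_i$ stabilizing the cusp, so it descends to a bounded $(n-1)$-form $\beta_i$ on $E_i$ with $d\beta_i = \omega_M|_{E_i}$. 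On the compact core $M_0$, any primitive is automatically bounded; one exists because $H^n_{dR}(M_0, \partial M_0) $ considerations, or more simply because the volume class vanishes in $H^n(M,\mathbb{R})$ (as $M$ is open), let me produce a smooth (not necessarily bounded) primitive $\gamma$ of $\omega_M$ on all of $M$ first, then on each cusp $\beta_i - \gamma|_{E_i}$ is closed and bounded-minus-smooth; I would instead directly glue: choose a smooth partition of unity $\{\rho_0, \rho_1, \dots, \rho_m\}$ subordinate to $\{\mathcal N(M_0), E_1, \dots, E_m\}$ with $\|d\rho_i\|$ bounded (possible since the $E_i$ are genuine product-like ends), and note $\omega_M$ has a bounded primitive on each overlap region $\mathcal N(M_0)\cap E_i$ since that region is relatively compact. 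Using a \v Cech–de Rham / Mayer–Vietoris patching argument with these bounded building blocks and bounded-derivative partition of unity, assemble $\beta$ with $d\beta = \omega_M$ and $\|\beta\|_\infty < \infty$ globally.

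The main obstacle I expect is exactly this gluing step: making sure that the patching does not destroy boundedness. The difference $\beta_i - \beta_j$ (or $\beta_i$ minus a local primitive on the core) on an overlap is a closed bounded $(n-1)$-form on a region that, while noncompact in the cusp direction, has a controlled geometry (a horospherical product); one needs that such closed bounded forms have bounded primitives there — this is where Proposition \ref{pro2-2} / Corollary \ref{cor2-3} applied on the universal cover $X$ (or on a simply connected subregion) is invoked, noting $n-1 \geq 2$ so the hypothesis $k \geq 2$ is satisfied as soon as $n \geq 3$ (and the cases $n = 1, 2$ are degenerate: negatively curved geometrically finite surfaces of infinite volume have free fundamental group, so $H^2$ and a fortiori $H^2_b$ behave trivially — I would dispatch them separately). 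A secondary subtlety is the interaction between the smooth de Rham picture on $M$ and the singular bounded cochain complex: I must invoke that for a bounded smooth form $\beta$ on a manifold of bounded geometry, $\Psi(\beta)$ is genuinely a bounded singular cochain — this follows because geodesic (equivalently, smooth with controlled image) simplices have image of uniformly bounded volume by \cite{IY81}, so integrating a bounded form over them gives a uniformly bounded value — and then apply Lemma \ref{lem4-1} to pass from $\omega_M(\cdot)$ to $\widehat\omega_M$. Assembling these pieces yields $[\widehat\omega_M] = 0$ in $H^n_b(M,\mathbb{R})$.
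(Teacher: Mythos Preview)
Your overall strategy---produce a bounded primitive of $\omega_M$ by using the $P_i$-invariant bounded primitives from Corollary~\ref{cor2-4} on the cusps and patching over a compact core---is the same as the paper's. However, there is a genuine gap in your decomposition of $M$.

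You write $M = M_0 \cup E_1 \cup \cdots \cup E_m$ with $M_0$ compact and each $E_i$ a cusp neighborhood. But a geometrically finite manifold of \emph{infinite} volume cannot decompose this way: cusps have finite volume, so such a decomposition would force $\mathrm{Vol}(M) < \infty$. What you are omitting are the ``funnel'' (flared) ends---the part of $M$ outside the convex core $\mathcal{C}_M$, which carries all of the infinite volume. On those ends there is no parabolic fixed point to feed into Corollary~\ref{cor2-4}, and the stabilizer of such an end is not contained in a parabolic subgroup, so your construction produces no bounded primitive there and your Mayer--Vietoris patching has nothing to patch with.

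The paper sidesteps this by not attempting a bounded primitive on all of $M$. Instead it works on the convex core: since the inclusion $i \colon \mathcal{C}_M \hookrightarrow M$ is a homotopy equivalence, $i^* \colon H^n_b(M,\mathbb{R}) \to H^n_b(\mathcal{C}_M,\mathbb{R})$ is an isomorphism, so it suffices to kill $i^*[\widehat{\omega}_M]$. Now $\mathcal{C}_M \setminus \mathcal{P}$ \emph{is} compact (this is one of the equivalent formulations of geometric finiteness in \cite{Bo95}), so any primitive $\beta$ of $\omega_M$ that agrees with the bounded $\eta_{\mathcal{P}}$ on the cusps is automatically bounded on all of $\mathcal{C}_M$. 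The paper achieves $\beta|_{\mathcal{P}} = \eta_{\mathcal{P}}$ exactly by a long-exact-sequence argument (correcting an arbitrary global primitive $\zeta$ by a closed form $\tau$ and an exact correction $d\mu$), which is cleaner than your partition-of-unity gluing and avoids the boundedness worries you yourself flag.

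A secondary issue: your claim that ``$\Psi(\beta)$ is a bounded cochain because smooth simplices have uniformly bounded volume'' is false---arbitrary singular simplices have no volume bound; only geodesic ones do, and that is exactly what \cite{IY81} says. Consequently Lemma~\ref{lem4-1}, which only gives cohomologous cocycles in \emph{ordinary} cohomology, cannot transfer vanishing from $[\omega_M(\cdot)]$ to $[\widehat{\omega}_M]$ in bounded cohomology. The correct route, used in the paper, is the direct identity $\widehat{\omega}_M = \delta(\beta \circ Str_{n-1})$ together with the observation that $\beta \circ Str_{n-1}$ is bounded on $\mathcal{C}_M$ because $\|\beta\|$ is bounded there and geodesic $(n-1)$-simplices have uniformly bounded volume.
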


\begin{proof}
If $\Gamma$ is elementary, $\Gamma$ has an abelian subgroup of
finite index. Then, the top dimensional bounded cohomology of $M$ is
trivial \cite[Corollary 7.5.11]{Mo01}, and hence, $[\widehat{\omega}_M]=0$ in
$H^n_b(M,\mathbb{R})$.

Now let's assume that $\Gamma$ is not elementary.
Denote by $\omega_M$ the Riemannian volume form on $M$ and
by $\omega_X$ the Riemannian volume form on $X$.
In fact, the volume form $\omega_M$ on $M$ is
induced from $\omega_X$, that is, $\omega_M =\Gamma \backslash \omega_X$.
The top-dimensional de Rham cohomology of $M$ is trivial because $M$ is non-compact.
Thus, there exists a differential $(n-1)$-form $\zeta \in \Omega^{n-1}(M)$
such that $\omega_M = d \zeta$.

Let $\mathcal{P}_1,\ldots,\mathcal{P}_k$ be pairwise disjoint cusps of $M$.
Then, there exists a horoball $H_i$ in $X$ with center at infinity $\xi_i \in \partial X$
such that $\mathcal{P}_i$ is isometric to $\Gamma_i \backslash H_i$,
where $\Gamma_i$ is a subgroup of $\Gamma$ fixing $\xi_i$ for each $i=1,\ldots,k$.
Note that $\Gamma_i$ is contained in the parabolic subgroup
$P_i$ of $G$ fixing $\xi_i\in \partial X$.

By Corollary \ref{cor2-4},
there exists a bounded $P_i$-invariant differential $(n-1)$-form $\eta_i$ with $\omega_X=d \eta_i$,
and we have
$$ \omega_M |_{\mathcal{P}_i} = (\Gamma \backslash \omega_X)|_{\mathcal{P}_i}
= \Gamma_i \backslash (\omega_X |_{H_i})
=\Gamma_i \backslash (d \eta_i |_{H_i}) = d (\Gamma_i \backslash \eta_i) |_{\mathcal{P}_i},$$
for each $i=1,\ldots,k$. The last equation is possible because $\eta_i$ is $P_i$-invariant and,
therefore, $\Gamma_i$-invariant.
Let $\mathcal{P}=\mathcal{P}_1 \cup \cdots \cup \mathcal{P}_k$ be the disjoint union of $\mathcal{P}_i$'s and $\eta_\mathcal{P}$ be the differential
form on $\mathcal{P}$ with $\eta_{\mathcal{P}} |_{\mathcal{P}_i} = (\Gamma_i \backslash \eta_i)
|_{\mathcal{P}_i}$ for each $i=1,\ldots,k$.
Since $\eta_i$ is a bounded differential form on $\mathcal{P}_i$ for each $i=1,\cdots,k$,
$\eta_\mathcal{P}$ is also a bounded differential form on $\mathcal{P}$. Moreover,
we have $d\eta_\mathcal{P}=\omega_M |_\mathcal{P}$ and
{\setlength\arraycolsep{2pt} \begin{eqnarray*}
d ( \zeta |_{\mathcal{P}} - \eta_{\mathcal{P}} )&=& d \zeta |_{\mathcal{P}} - d \eta_{\mathcal{P}}
\\
&=& \omega_M |_{\mathcal{P}} - \omega_M |_{\mathcal{P}} =0.
\end{eqnarray*}}
Thus, $\zeta |_{\mathcal{P}} - \eta_{\mathcal{P}}$ is a closed $(n-1)$-form on $\mathcal{P}$.
From the following exact sequence,
$$\cdots \longrightarrow H^{n-1}_{dR}(M) \stackrel{i^*}{\longrightarrow}
H^{n-1}_{dR}(\mathcal{P}) \longrightarrow H^n_{dR}(M,\mathcal{P}),$$
there exists a closed $(n-1)$-form $\tau$ on $M$ such that
$$[\eta_{\mathcal{P}}- \zeta |_{\mathcal{P}}] = [i^*\tau].$$
Hence, there exists a $(n-2)$-form $\mu$ on $\mathcal{P}$ such that
$$ i^*\tau - (\eta_{\mathcal{P}}- \zeta |_{\mathcal{P}}) = d \mu.$$
One may extend $\mu$ to whole $M$; thus, we set $(n-1)$-form $\beta$ on $M$ by
$$\beta = \tau + \zeta -d\mu.$$
Then, we can check
$$d \beta = d \tau + d\zeta -d^2\mu = d \zeta = \omega_M,$$ and
{\setlength\arraycolsep{2pt}
\begin{eqnarray*}
\beta |_{\mathcal{P}} &=& \tau |_{\mathcal{P}} + \zeta |_{\mathcal{P}} -d\mu \\
&=& i^*\tau +\zeta |_{\mathcal{P}} -d\mu = \eta_{\mathcal{P}}.
\end{eqnarray*}}
Since $\eta_{\mathcal{P}}$ is bounded, $\beta |_{\mathcal{P}}$ is also bounded.
This means that there exists $L_1>0$ such that for every $x \in \mathcal{P}$,
$$\| \beta_x \| = \sup_{v_1,\ldots,v_{n-1} \in T^1_xM} |\beta_x(v_1,\ldots,v_{n-1})| \leq L_1.$$

Furthermore, since
$\mathcal{C}_M-\mathcal{P}$ is compact by the equivalent definition of the geometrically finite manifold,
there exists $L_2 >0$ such that the norm of $\beta$ over $\mathcal{C}_M-\mathcal{P}$
is bounded from above by $L_2$. Finally, we can conclude that
$$\| \beta_x \| < L,$$
for every $x\in \mathcal{C}_M$, where $L=\max \{L_1,L_2\}$.

For any $n$-simplex $\sigma$, we have
$\widehat{\omega}_M(\sigma)=\omega_M(Str_n(\sigma))=d\beta (Str_n(\sigma))
=\beta(\partial Str_n(\sigma))=\beta(Str_{n-1}(\partial \sigma))=\delta (\beta \circ Str_{n-1})(\sigma)$.
Since the homomorphism $i^* : H^n_b(M,\mathbb{R}) \rightarrow
H^n_b(\mathcal{C}_M,\mathbb{R})$ is an isomorphism, $[\widehat{\omega}_M]=0$ if and only if
$[i^*\widehat{\omega}_M]=0$, where $i:\mathcal{C}_M \rightarrow M$ is the inclusion.
Note that $\mathcal{C}_M$ is convex, and hence, $i^*\widehat{\omega}_M$ is well-defined.
Since the norm $\| \beta_x \|$ over $\mathcal{C}_M$ and the volume of geodesic simplices in $M$
are uniformly bounded from above, $i^*(\beta \circ Str_n) $ is
a bounded cochain of $C^{n-1}_b(\mathcal{C}_M,\mathbb{R})$,
which implies $[i^*\widehat{\omega}_M]=0$ in $H_b^n(\mathcal{C}_M,\mathbb{R})$.
Therefore, we can conclude $[\widehat{\omega}_M]=0$ in $H_b^n(M,\mathbb{R})$.
\end{proof}

In contrast to the case of geometrically finite manifolds, one knows
very little about Riemannian manifolds with infinite volume
that are not geometrically finite.
However, the situation in pinched negatively curved three-manifolds is completely different.

Let $M$ be a complete pinched negatively curved three-manifold with
a finitely generated fundamental group.
If $M$ has no cusps,
Agol \cite{Ag04} showed that $M$ is topologically tame; that is,
$M$ is homeomorphic to the interior of a compact manifold with boundary.

\begin{defi}
An end $E$ of $M$ is said to be \textit{geometrically infinite} if
there exists a divergent sequence of geodesics exiting $E$. An end
that is not geometrically infinite will be called
\textit{geometrically finite}.
\end{defi}

Bonahon \cite{Bo86} show that if an end $E$ is geometrically finite, then
there exists a neighborhood $U$ of $E$ which does not intersect the convex core of $M$
for the constant curvature case. However, it continues to hold for pinched negatively curved three-manifolds. It implies that if every end of $M$ is geometrically finite,
$M$ is geometrically finite. In other words, if $M$ is not geometrically finite,
$M$ must have a geometrically infinite end.

Let $U$ be a neighborhood of $E$.
If, for some surface $S_E$, $U$ is homeomorphic
to $S_E \times [0,\infty)$ and there exists a sequence of simplicial ruled surfaces
$(f_i : S_E \rightarrow U)_{i\in \mathbb{N}}$ such
that $f_i(S_E)$ is homotopic to $S_E \times 0$ in $U$ and leaves every compact subset of $M$,
then $E$ is said to be \textit{simply degenerate}.

\begin{thm}[Bonahon]
Let $M$ be a negatively pinched complete three-manifold with $\Gamma$ purely loxodromic.
If $M$ has boundary-irreducible compact core, then every geometrically infinite end of $M$ is simply degenerate.
\end{thm}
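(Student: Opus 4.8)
The plan is to realize the geometrically infinite end $E$ by a sequence of simplicial ruled surfaces extracted from geodesics exiting $E$, using the boundary-irreducibility hypothesis to guarantee that such surfaces exist in the correct homotopy class and carry controlled intrinsic geometry. First I would fix the topological picture. Since $\Gamma$ is purely loxodromic, $M$ has no cusps, so by Agol's tameness theorem $M$ is homeomorphic to the interior of a compact manifold, and $E$ has a neighborhood $U$ homeomorphic to $S_E \times [0,\infty)$ for a surface $S_E$ occurring as a boundary component of the compact core $C$. Boundary-irreducibility of $C$ means $S_E$ is incompressible, so the inclusion induces an injection $\pi_1(S_E) \hookrightarrow \pi_1(M) = \Gamma$; this $\pi_1$-injectivity is exactly what makes the ruled-surface machinery (the variable-curvature analogue of Thurston's pleated surfaces) available.

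Next I would set up the realization of curves by ruled surfaces. For an incompressible surface, any system of essential simple closed curves $\{c_1,\ldots,c_m\}$ that fills $S_E$ can be realized by a simplicial ruled surface $f : S_E \to M$ homotopic to the inclusion, sending each $c_j$ to its geodesic representative. The essential geometric input is a bounded diameter estimate: since the sectional curvature satisfies $-b^2 \le K \le -a^2 < 0$, Gauss--Bonnet bounds the intrinsic area of such a ruled surface above by a constant $A_0 = A_0(\chi(S_E),a)$; combined with the absence of cusps and a thick--thin decomposition, this yields a uniform bound $D_0$ on the diameter of the portion of $f(S_E)$ meeting the thick part of $M$.

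The decisive step uses that $E$ is geometrically infinite. By definition there is a divergent sequence of geodesics $g_i$ exiting $E$; each $g_i$ may be arranged to be freely homotopic into $S_E$, representing a curve $c_i$ on $S_E$. Realizing $c_i$ by a ruled surface $f_i : S_E \to M$ as above, the image $f_i(S_E)$ contains the geodesic representative of $c_i$, which lies arbitrarily far out the end as $i \to \infty$; the bounded diameter estimate $D_0$ then forces the \emph{entire} image $f_i(S_E)$ to leave every fixed compact subset of $M$. Finally, $\pi_1$-injectivity of $S_E$ forces each $f_i$ to be homotopic to $S_E \times 0$ inside $U$, since its action on $\pi_1$ agrees with the standard inclusion and $U$ is a product. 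This produces precisely the sequence of simplicial ruled surfaces demanded by the definition of simple degeneracy.

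The main obstacle is to transport the bounded diameter lemma and the curve-realization theorem from Bonahon's original constant-curvature argument into the pinched setting. The area bound via Gauss--Bonnet is immediate from $K \le -a^2$, but the more delicate points are to organize the exiting geodesics into filling curves on $S_E$ whose realizations genuinely track those geodesics out the end, and to rule out any collapsing of $f_i(S_E)$ in the thin part where variable curvature no longer provides the rigidity of the hyperbolic case. Controlling this thin-part behavior is where the pinching bounds and the purely loxodromic hypothesis must be used in an essential way, and it is the heart of the argument.
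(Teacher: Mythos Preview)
The paper does not prove this theorem; it is quoted as a result of Bonahon \cite{Bo86} (with the boundary-irreducible hypothesis later removed by Hou \cite{Ho03}) and is used as a black box in the proof of Theorem~\ref{thm7-1}. So there is no in-paper argument to compare your proposal against.

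That said, your sketch has a genuine gap at its most important step. You write that the exiting geodesics $g_i$ ``may be arranged to be freely homotopic into $S_E$''; this is exactly the heart of Bonahon's theorem and cannot be asserted without proof. A priori an exiting closed geodesic could be homotopically complicated in $M$ and not realizable as a simple closed curve (or filling system) on $S_E$. Bonahon's actual argument does not start from the exiting geodesics directly; rather, he uses an intersection-number/measured-lamination dichotomy (and an interpolation by pleated surfaces) to show that either the end is simply degenerate or there is a recurrent geodesic, contradicting geometric infiniteness. Your outline also appeals to Agol's tameness to obtain the product neighborhood $S_E\times[0,\infty)$ in advance, whereas historically (and in Bonahon's logic) the product structure is an \emph{output} of the simply-degenerate conclusion, not an input; in the variable-curvature setting one must be careful that the tameness input you invoke is independent of the statement you are trying to prove. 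Your closing paragraph correctly identifies the thin-part control as delicate, but the step you pass over silently---producing curves on $S_E$ whose geodesic representatives exit $E$---is the real crux.
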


This result is also available without the assumption that $M$ has boundary-irreducible
compact core, which was proved by Hou \cite{Ho03}. Therefore, if $M$ is not geometrically finite,
$M$ has a simply degenerate end.

\begin{thm}\label{thm7-1}
Let $M$ be a complete, pinched negatively curved three-manifold with infinite volume
and positive injectivity radius.
Then, $[\widehat{\omega}_M]=0$ in $H^3_b(M,\mathbb{R})$
if and only if $M$ is geometrically finite.
\end{thm}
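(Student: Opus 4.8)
The plan is to prove the two implications separately. The ``only if'' direction is immediate from Theorem \ref{thm5-1} (restated as Theorem \ref{thm7-1} in the constant-curvature-bounded setting): if $M$ is geometrically finite with infinite volume, then $[\widehat{\omega}_M]=0$. So the real content is the converse: if $M$ is \emph{not} geometrically finite, then $[\widehat{\omega}_M]\neq 0$ in $H^3_b(M,\mathbb{R})$. By the structure theory recalled just before the statement, a pinched negatively curved three-manifold $M$ with finitely generated fundamental group that is not geometrically finite must have a \emph{simply degenerate end} $E$, i.e.\ a neighborhood $U\cong S_E\times[0,\infty)$ swept out by simplicial ruled surfaces $f_i:S_E\to U$ homotopic to $S_E\times 0$ and exiting every compact set. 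The strategy is to show that the presence of such an end forces the bounded fundamental class to be non-zero.

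The key step is a lower bound on the simplicial volume (or, dually, on the $\ell^1$-norm of relative cycles) near the degenerate end. I would argue by contradiction: suppose $[\widehat{\omega}_M]=0$, so there is a bounded cochain $b\in C^2_b(M,\mathbb{R})$ with $\delta b=\widehat{\omega}_M$, say $\|b\|_\infty\le K$. Then for any smooth singular $3$-chain $z$ with $\partial z$ supported in a fixed compact piece one controls $\int_z\omega_M=\widehat\omega_M(z)=b(\partial z)$ by $K\cdot\|\partial z\|_1$. Using the simplicial ruled surfaces $f_i$, I would build, for each large $i$, a $3$-chain $z_i$ filling the region of $U$ between $f_0(S_E)$ and $f_i(S_E)$; its volume $\int_{z_i}\omega_M$ tends to infinity (the region between consecutive ruled surfaces has volume bounded below because the $f_i$ exit compacta while $M$ has positive injectivity radius and pinched curvature, so there is a definite amount of volume between deep ruled surfaces — this is where bounded geometry is used), while $\partial z_i$ is the difference of the two ruled-surface cycles $f_i(S_E)-f_0(S_E)$, each triangulated with a \emph{uniformly bounded} number of simplices since a simplicial ruled surface realizing a fixed surface $S_E$ has at most $-2\chi(S_E)$ triangles. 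Hence $\|\partial z_i\|_1$ stays bounded while $\int_{z_i}\omega_M\to\infty$, contradicting $\int_{z_i}\omega_M\le K\|\partial z_i\|_1$. This also requires replacing $z_i$ by its geodesic straightening $Str_*(z_i)$, which changes neither the boundary's simplex count nor, up to a uniformly bounded error, the integral of $\omega_M$, so that $\widehat\omega_M(z_i)$ is what is actually being estimated.

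There are two technical points to nail down. First, one must ensure the chains $z_i$ can be taken with a controlled number of top-dimensional simplices relative to their boundary — but in fact that is not needed, since the contradiction only uses a bound on $\|\partial z_i\|_1$ and a lower bound on $\widehat\omega_M(z_i)$, not on $\|z_i\|_1$. Second, and more delicate, one must verify that the volume of the region between $f_0(S_E)$ and $f_i(S_E)$ genuinely diverges. Here I would invoke that the $f_i$ leave every compact subset of $M$ together with positivity of the injectivity radius: the regions $R_{i}$ between $f_i$ and $f_{i+1}$ can be chosen nested and exhausting $U$, and since $U$ has infinite volume (a simply degenerate end of a manifold with $\operatorname{inj}\ge\epsilon_0>0$ and curvature bounded away from $0$ has infinite volume, as the ruled surfaces have bounded area but sweep out an unbounded embedded product region, each unit of ``depth'' contributing at least a fixed positive volume), the partial sums $\sum_{j\le i}\operatorname{Vol}(R_j)\to\infty$.

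The main obstacle I expect is this volume-growth estimate for the degenerate end: making precise that a simply degenerate end under the pinching-plus-positive-injectivity-radius hypotheses carries infinite volume with at least \emph{linear} growth in the ``ruled-surface depth,'' so that a bounded primitive $b$ is contradicted. In the constant curvature case this is classical (Thurston/Bonahon, via the fact that successive ruled surfaces are a bounded distance apart in the thick part), and the point is to check that Bonahon's and Hou's work on pinched negatively curved three-manifolds, together with positivity of the injectivity radius (which prevents volume from being concentrated in thin parts), supplies the same conclusion. Once that is in hand, the $\ell^1$-versus-volume comparison above closes the argument. I would present the proof as: (1) the ``if'' direction from Theorem \ref{thm5-1}; (2) reduction of the ``only if'' direction to producing the diverging chains $z_i$ in a simply degenerate end; (3) the construction of $z_i$ from the ruled surfaces $f_i$ with uniformly bounded $\|\partial z_i\|_1$; (4) the volume lower bound $\widehat\omega_M(z_i)\to\infty$; (5) the contradiction with $\widehat\omega_M(z_i)=b(\partial Str_*(z_i))$ and $\|b\|_\infty<\infty$.
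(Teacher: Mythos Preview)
Your architecture is the paper's: contradict a hypothetical bounded primitive $b$ with $\delta b=\widehat\omega_M$ by exhibiting chains $z_i$ in the degenerate end with $\|\partial z_i\|_1$ uniformly bounded but $\widehat\omega_M(z_i)\to\infty$. Two steps you treat as routine are where the actual work lies. First, simplicial ruled surfaces $f_i:S\to U$ are in general only \emph{immersed}, so there is no ``region of $U$ between $f_0(S)$ and $f_i(S)$'' with a well-defined volume, and a chain built from a homotopy $S\times[0,1]\to U$ can have $\int_{z_i}\omega_M$ far from any volume because of cancellations. The paper fixes this by replacing each $f_i$ with a nearby \emph{embedded} surface $h_i$ (Freedman--Hass--Scott), using Waldhausen to identify the compact region $M(i)$ bounded by $h_1(S)\cup h_i(S)$ with $S\times[0,1]$, and triangulating $M(i)$ to obtain a genuine fundamental chain $z_i$ with $\int_{z_i}\omega_M=\mathrm{Vol}(M(i))\to\infty$.

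Second, your claim that $\partial z_i$ has at most $-2\chi(S)$ simplices and that straightening changes $\int_{z_i}\omega_M$ by a uniformly bounded error is not justified as stated: the number of triangles in a simplicial-ruled triangulation depends on its vertex count and is not bounded by $-2\chi(S)$ a priori, and the error term $\int_{H_2(\partial z_i)}\omega_M$ is controlled only when each boundary $2$-simplex $\sigma$ sits in a convex ball of bounded radius. The paper therefore discards the ruled triangulation entirely and builds a new triangulation $\mathcal{T}_i$ of each embedded $h_i(S)$ by a packing argument (Fejes T\'oth, Buser), arranged so that every simplex has diameter $<r_0/2$ (hence $H_2(\sigma)$ lies in an embedded ball of volume at most $B(r_0/2)$) and the number of simplices is bounded in terms of $\chi(S)$ and the injectivity radius alone; it then uses the $2$-dimensional Hauptvermutung to make the two boundary triangulations compatible on a common refinement and glue a triangulation of $M(i)$. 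With these two ingredients---embedded surfaces and small-simplex retriangulation---your outline becomes the paper's proof.
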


\begin{proof}
If $M$ is geometrically finite, then $[\widehat{\omega}_M]=0$ in
$H^3_b(M,\mathbb{R})$ by Theorem \ref{thm5-1}. Conversely, suppose
that $M$ is not geometrically finite and $[\widehat\omega_M]=0$ in
$H^3_b(M,\mathbb{R})$. We may assume that $M$ has sectional
curvature $\leq -1$ by scaling the metric. Due to the positive
injectivity radius of $M$, $M$ has no cusps. By the theorem of
Bonahon and Hou, $M$ must have a simply degenerate end, denoted by
$E$. From the definition of a simply degenerate end, there exists a
sequence of exiting simplicial ruled surfaces $(f_i:S\rightarrow
U)_{i \in \mathbb{N}}$. Then, one can assume that the images $\{
f_i(S) \}$ are disjoint and, moreover, $\mathcal{N}_1(f_i(S)) \cap
\mathcal{N}_1(f_j(S)) =\emptyset $ for all $i,j \in \mathbb{N}$ with
$i\neq j$, where $\mathcal{N}_r(A)$ is the $r$-neighborhood of $A$
in $M$ for a subset $A$ of $M$ and $r>0$.

For each simplicial ruled surface $f_i:S\rightarrow U$,
there exists a triangulation $T_i$ of $S$ such that each face of $T_i$ is
taken to a non-degenerate ruled
triangle and the total angle about each vertex is at least $2\pi$. Each ruled triangle has a
Riemannian metric of
curvature at most $-1$ and, thus, $f_i$ induces a piecewise Riemannian metric $\kappa_i$ on $S$
of curvature at most $-1$ with concentrated negative curvature at the vertices.
It is useful to consider $f_i:(S,\kappa_i)\rightarrow U$ to be a pathwise isometry.
It follows from the Gauss-Bonnet theorem for ruled triangles that we have
$$\text{Area}(S,\kappa_i) \leq 2 \pi |\chi(S)|.$$

Since $\text{inj}(S,\kappa_i) \geq \text{inj}(M) \geq r_0$ for some $r_0>0$, there exists a
constant $R>0$ depending only on $\chi (S)$ and $r_0$ such that
$$ \text{Diam}(S,\kappa_i) \leq R.$$

By the work of Freedman-Hass-Scott \cite{FHS83}, there exists an embedding $h_i:S\rightarrow U$
with image in $\mathcal{N}_1(f_i(S))$ that is homotopic to $f_i:S\rightarrow U$ for each $i\in \mathbb{N}$. In fact, $h_i$ is the minimal area surface with respect to a metric on $M$,
which is very large in the complement of $\mathcal{N}_1(f_i(S))$. Let $\tau_i$ be the metric
on $S$ induced by $h_i$. From the property of minimal area surface, we obtain
$$\text{Area}(S,\tau_i)\leq \text{Area}(S,\kappa_i) \leq 2 \pi |\chi(S)|.$$
We refer to \cite[Theorem 2.5]{CM96} for more details about this. In
addition, the diameter of $(S,\tau_i)$ is uniformly bounded from
above by $R$ because of $\text{inj}(S,\tau_i) \geq \text{inj}(M)
\geq r_0$. Note that $h_i(S) \cap h_j(S) = \emptyset \text{ for all
}i\neq j$ because $\mathcal{N}_1(f_i(S)) \cap \mathcal{N}_1(f_j(S))
=\emptyset $ for all $i,j \in \mathbb{N}$ with $i\neq j$.

Let $\mathcal{V}_i$ be a finite set of points in $(S,\tau_i)$ such that
\begin{enumerate}
\item any two points of $\mathcal{V}_i$ lie at distance at least $r_0/2$ from each other,
\item for all $x\in (S,\tau_i)$, there exists $p \in \mathcal{V}_i$ such that the distance from
    $x$ to $p$ is
less than $r_0/2$.
\end{enumerate}

A set $\mathcal{V}_i$ is obtained by successively marking points in $(S,\tau_i)$ at pairwise
distances $\geq r_0/2$
until there is no more room for such points. Due to Fejes T\'{o}th \cite{Fe53} and Buser
\cite{Bu92}, we can get
a triangulation $\mathcal{T}_i$ of $(S,\tau_i)$ as follows.

Consider circles with radius less than $r_0/2$ going through three
or more points of $\mathcal{V}_i$ but such that no point lies in the
interior of any of these circles. A continuity argument involving
families of circles with growing radii shows that any point of
$\mathcal{V}_i$ lies on such a circle. The radius of any circle in
the family is less than $r_0/2$. Moreover, if for each such circle
we draw the inscribed polygon whose vertices are the points of
$\mathcal{V}_i$ on that circle, then we get a tessellation of
$(S,\tau_i)$. We refer to \cite{Bu92} for more details on this
topic. Adding diagonals coming out of some fixed vertex on polygons
that are not triangles, we get a triangulation $\mathcal{T}_i$ of
$(S,\tau_i)$.

Now, we claim that the cardinality of $\mathcal{V}_i$ is uniformly bounded from above.
From the construction of $\mathcal{V}_i$, we have a family of two-dimensional small metric balls
$\{ B(x,r_0/4) \}_{x\in \mathcal{V}_i}$ that are embedded in $(S,\tau_i)$ and pairwise disjoint.
By the estimation of the area of a small metric ball in \cite{Cr09},
there exists a universal constant $a_0>0$ such that the area of small metric ball $B(x,r_0/4)$ can
be estimated by
$$ \text{Area}(B(x,r_0/4))\geq a_0 r_0^2.$$
Since the area of $(S,\tau_i)$ is uniformly bounded from above by $2 \pi |\chi(S)|$, we have
$$ 2\pi |\chi(S)| \geq \sum_{x\in \mathcal{V}_i} \text{Area}(B(x,\frac{r_0}{4})) \geq a_0 r_0^2
|\mathcal{V}_i|.$$
Thus, it gives an upper bound of the cardinality of $\mathcal{V}_i$ as follows:
$$|\mathcal{V}_i|\leq \frac{2\pi |\chi(S)|}{a_0 r_0^2 }.$$
By the uniform upper bound of $|\mathcal{V}_i|$, the number of faces of $\mathcal{T}_i$ is
also uniformly bounded from above by $N_0$ for some $N_0 \in \mathbb{N}$.

Let $M(i)$ be the region bounded by $h_1(S)$ and $h_i(S)$. By the Waldhausen's result \cite{Wa68},
$M(i)$ is homeomorphic to $S\times [0,1]$. Recall the Hauptvermutung conjecture,
that any two triangulations of a triangulable space have a common refinement, is
true for dimension $2$ (See \cite{Ra24}, \cite{Mo52}). Hence, we have a common refinement
$\mathcal{R}_i$ on $S$
of both $\mathcal{T}_1$ and $\mathcal{T}_i$ for each $i\in \mathbb{N}$.

Now, we will construct a triangulation $\mathcal{K}_i$ of $M(i)$
that restricts to triangulations corresponding to $\mathcal{T}_1$ and $\mathcal{T}_i$ of
the boundary of $M(i)$.
For convenience, we think of $M(i)$ as $S\times
[0,1]$.
First, cut $M(i)$ along $S\times \{1/2\}$.
Then, we get two pieces, $M_1(i)$ and $M_2(i)$, corresponding to $S\times [0,1/2]$ and
$S\times [1/2,1]$, respectively. Since $\mathcal{R}_i$ is a refinement of $\mathcal{T}_1$,
there exists a triangulation $\mathcal{K}_1(i)$ of $M_1(i)$ such that $\mathcal{K}_1(i)$
restricts to $\mathcal{T}_1$ of $S\times \{0 \}$
and restricts to $\mathcal{R}_i$ of $S\times \{ 1/2 \}$.
Similarly, there exists a triangulation $\mathcal{K}_2(i)$ of $M_2(i)$ such that $\mathcal{K}_2(i)$
restricts to
$\mathcal{R}_i$ of $S\times \{ 1/2 \}$
and restricts to $\mathcal{T}_i$ of $S\times \{1 \}$.
Since two triangulations $\mathcal{K}_1(i)$ and $\mathcal{K}_2(i)$ have the common triangulation on $S\times \{1/2\}$,
one can glue them and get a triangulation $\mathcal{K}_i$ of $M(i)$.
It follows from the construction of $\mathcal{K}_i$ that
$\mathcal{K}_i$ restricts to $\mathcal{T}_1$(resp. $\mathcal{T}_i$) of $S\times \{ 0\}$(resp. $S\times \{ 1\}$).

From the triangulation $\mathcal{K}_i$ of $M(i)$, we can obtain a
singular $3$-chain $z_i$ in $C_3(M,\mathbb{R})$ with $\partial z_i =
w_1 + w_i$, where $w_1$ (resp. $w_i$) is the $2$-chain in
$C_2(M,\mathbb{R})$ obtained from the triangulation $\mathcal{T}_1$
(resp. $\mathcal{T}_i$). Note that $w_1$ and $w_i$ are cycles in
$C_2(M,\mathbb{R})$ because $h_1(S)$ and $h_i(S)$ are homeomorphic
to the closed surface $S$.

Let
$H_*:C_*(M)\rightarrow C_{*+1}(M)$ be the chain homotopy from the geodesic straightening map
to the identity, that is, $\partial_{k+1} H_k +H_{k-1} \partial_k =Str_k -Id.$
Recall that the chain homotopy $H_k$ is defined by the straight line
homotopy between any $k$-simplex and its geodesically straightened
simplex.
Since $w_1$ and $w_i$ are cycles in $C_2(M,\mathbb{R})$, we have
{\setlength\arraycolsep{2pt}
\begin{eqnarray*}
\partial Str_3(z_i) &=& Str_2(\partial z_i)
= Str_2(w_1+w_i) \\
&=& \partial_3 H_2 (w_1+w_i) + H_1 \partial_2 (w_1+w_i) + (w_1+w_i) \\
&=& \partial_3 H_2 (w_1+w_i)+(w_1+w_i).
\end{eqnarray*}}
Any singular simplex $\sigma$ in $w_1$ or $w_i$ is contained in an
embedded ball $B$ in $M$ with  radius at most $r_0/2$. Since an
embedded ball in $M$ is geometrically convex, the image of
$Str_2(\sigma)$ is contained in the same ball $B$, and hence, the
image of the canonical straight line homotopy $H_2(\sigma)$ between
$\sigma$ and $Str_2(\sigma)$ is also totally contained in $B$. From
the definition of the canonical straight line homotopy, the volume
of $H_2(\sigma)$ cannot exceed the volume of the ball with radius
$r_0/2$; that is,
$$ |\omega_M (H_2(\sigma)) | = \Big|\int_{H_2(\sigma)} \omega_M \Big|\leq
B(\frac{r_0}{2}), $$
where $B(r_0/2)$ is the greatest volume of any ball of radius $r_0/2$ in $M$.
If $B_{\mathbb{H}^3}(r)$ denotes the volume of a ball of radius $r$
in the hyperbolic space $\mathbb{H}^3$, we have $B(r) \leq a_1 B_{\mathbb{H}^3}(r)$
for some constant $a_1>0$ depending only on sectional curvature of $M$.
Therefore, there exists a uniform constant $a_2>0$ such that
$$ |\omega_M (H_2(\sigma)) | \leq B(\frac{r_0}{2}) \leq a_1 B_{\mathbb{H}^3}(\frac{r_0}{2}) \leq
a_2.$$

The assumption $[\widehat{\omega}_M]=0$ in $H^3_b(M,\mathbb{R})$ implies that there exists a
$\gamma \in C^2_b(M,\mathbb{R})$ with $\widehat{\omega}_M =\delta \gamma$.
Since the number of simplices in both $w_1$ and $w_i$ is less than $N_0$, we have
\begin{eqnarray}\label{eqn1}
|\widehat{\omega}_M(z_i)|=|\delta \gamma (z_i)| = |\gamma (\partial z_i)| =|\gamma (w_1+w_i)|
\leq 2 \| \gamma \|_\infty N_0.
\end{eqnarray}

On the other hand, since $M$ is non-compact, there exists a $\beta \in \Omega^2(M)$ with $d\beta
=\omega_M$ and thus,
{\setlength\arraycolsep{2pt}
\begin{eqnarray*}
|\widehat{\omega}_M(z_i)|=\Big|\int_{Str_3(z_i)}\omega_M \Big| &=& \Big|\int_{Str_3(z_i)}d\beta
\Big| \\
&=& \Big|\int_{\partial Str_3(z_i)} \beta \Big| \\
&=& \Big|\int_{\partial_3 H_2(w_1+w_i)+(w_1+w_i)} \beta \Big| \\
&=& \Big|\int_{H_2(w_1+w_i)} \omega_M +\int_{M(i)} \omega_M \Big| \\
&\geq & \Big|\int_{M(i)} \omega_M \Big|-\Big|\int_{H_2(w_1+w_i)} \omega_M \Big| \\
&\geq & \text{Vol}(M(i))-2 N_0 a_2.
\end{eqnarray*}}
Since $\text{Vol}(M(i))$ goes to infinity as $i$ tends to infinity,
this contradicts inequality (\ref{eqn1}). This means that the
bounded fundamental class $[\widehat{\omega}_M]$ can never vanish if
$M$ is not geometrically finite, which completes the proof.
\end{proof}

\section{Continuous bounded cohomology and bounded differential forms}

In this section, we collect some definitions and results about the
continuous (bounded) cohomology of a group. We begin with the
definition of the continuous cohomology of a group.

\subsection{The continuous (bounded) cohomology}
Let $G$ be a topological group. The continuous cohomology of $G$ with coefficients in
$\mathbb{R}$,
denoted by $H^*_c(G,\mathbb{R})$, is defined as the cohomology of the complex
$$0 \longrightarrow C_c(G,\mathbb{R})^G \stackrel{\delta}{\longrightarrow}
C^2_c(G,\mathbb{R})^G \stackrel{\delta}{\longrightarrow}
C^3_c(G,\mathbb{R})^G \stackrel{\delta}{\longrightarrow} \cdots $$
with the usual homogeneous coboundary operator $\delta$, where
$$C^k_c(G,\mathbb{R})=\{f:G^{k+1} \rightarrow \mathbb{R}\text{ }|\text{ }f \text{ is continuous}
\}$$
and $C^k_c(G,\mathbb{R})^G$ denotes the subspace of $G$-invariant cochains.
This coboundary operator maps a $G$-invariant continuous function $f:G^{k+1}\rightarrow
\mathbb{R}$
to the $G$-invariant continuous function
$$\delta f(g_0,\ldots ,g_k)=\sum^k_{i=0} (-1)^i f(g_0,\ldots ,\hat{g_i},\ldots ,g_k).$$
For $f \in C^k_c(G,\mathbb{R})$, we define its sup norm as
$$ \| f \|_\infty = \sup \{ |f(g_0,\ldots,g_k)|\text{ }|\text{ }(g_0,\ldots,g_k) \in G^{k+1} \}.$$

The coboundary operator restricts to the complex $C_{c,b}^*(G,\mathbb{R})$ of continuous
bounded cochains with respect to the sup norm on $C^*_c(G,\mathbb{R})$ and
the continuous bounded cohomology $H^*_{c,b}(G,\mathbb{R})$ is defined as the
cohomology of this complex. The inclusion of complex $C_{c,b}^*(G,\mathbb{R}) \subset
C^*_c(G,\mathbb{R})$
induces a comparison map $$c^*_G : H^*_{c,b}(G,\mathbb{R})\rightarrow H^*_c (G,\mathbb{R}).$$
We refer the reader to \cite{Gui80} for continuous cohomology and \cite{Mo01} for
the bounded continuous cohomology.

Let $G$ be a locally compact group and $L$ be a closed subgroup of $G$. Then,
the cohomology
groups $H^*_c(L,\mathbb{R})$ and $H^*_{c,b}(L,\mathbb{R})$ are computed as the
cohomology of the complexes $C^*_c(G,\mathbb{R})^L$ and
$C_{c,b}^*(G,\mathbb{R})^L$, respectively. Furthermore, the sup norm
on $C^*_c(G,\mathbb{R})$ gives rise to the semi-norms in $H^*_c(L,\mathbb{R})$
and $H^*_{c,b}(L,\mathbb{R})$. The restriction maps
$res^* : H^*_c(G,\mathbb{R})\rightarrow H^*_c(L,\mathbb{R})
\text{ and } res^*_b : H^*_{c,b}(G,\mathbb{R})\rightarrow
H^*_{c,b}(L,\mathbb{R})$ induced by the inclusion $L < G$ are
realized at the cochain level by the inclusions $C^*_c(G,\mathbb{R})^G
\subset C^*_c(G,\mathbb{R})^L$ and $C_{c,b}^*(G,\mathbb{R})^G \subset
C_{c,b}^*(G,\mathbb{R})^L$. For more details, see \cite{Mo01}.

\subsection{Bounded differential forms}
Let $G$ be a connected semi-simple Lie group with finite center and associated symmetric space
$\mathcal{X}$.
Let $L$ be a closed subgroup of $G$. According to the Van Est isomorphism,
the continuous cohomology $H^*_c(L,\mathbb{R})$ is canonically isomorphic
to the cohomology $H^*(\Omega^* (\mathcal{X})^L)$ of the complex
$(\Omega^* (\mathcal{X})^L , d)$ of $L$-invariant differential forms on $\mathcal{X}$.
In particular, the continuous cohomology $H^*_c(G,\mathbb{R})$
is isomorphic to $\Omega^* (\mathcal{X})^G $ because
$G$-invariant differential forms on $\mathcal{X}$ are closed and bounded.
Moreover, $\Omega^* (\mathcal{X})^G $ in the top degree is one-dimensional generated by
the $G$-invariant volume form $\omega_\mathcal{X}$ on $\mathcal{X}$. Hence, we have the Van Est
isomorphism in top degree $n$:
$$\mathcal{J} : H^n_c(G,\mathbb{R}) \stackrel{\cong}{\longrightarrow}
\Omega ^n (\mathcal{X})^G = \mathbb{R} \cdot \omega_\mathcal{X} .$$

Unfortunately, there is no analogue of the Van Est isomorphism in the context of
continuous bounded cohomology. However, Burger and Iozzi \cite{BI07}
explore the relation between continuous bounded cohomology and
the complex of invariant bounded differential forms.

\begin{thm}[Burger and Iozzi]\label{BI}
Let $\Gamma$ be a torsion-free discrete subgroup of a connected semi-simple
Lie group $G$ with finite center and associated symmetric space $\mathcal{X}$.
Any class in the image of the comparison map
$$c^*_\Gamma : H^*_b(\Gamma,\mathbb{R})\rightarrow H^* (\Gamma,\mathbb{R})$$
is representable by a closed form on $\Gamma \backslash \mathcal{X}$ which is bounded.
\end{thm}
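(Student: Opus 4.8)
The plan is to reduce the statement to the Van Est description of $H^*_c(G,\mathbb{R})$ together with Proposition 2.1 (Wienhard), which provides invariant bounded primitives, and then transfer this to the locally symmetric quotient $\Gamma\backslash\mathcal{X}$ using the identification of $H^*_c(G,\mathbb{R})$ and $H^*_c(\Gamma,\mathbb{R})$ with invariant-form complexes. First I would fix a class $\kappa\in H^*_b(\Gamma,\mathbb{R})$ and consider its image $c^*_\Gamma(\kappa)\in H^*(\Gamma,\mathbb{R})\cong H^*_c(\Gamma,\mathbb{R})$. Via the Van Est isomorphism, $H^*_c(\Gamma,\mathbb{R})$ is computed by the complex $(\Omega^*(\mathcal{X})^\Gamma,d)$, so $c^*_\Gamma(\kappa)$ is represented by some closed $\Gamma$-invariant differential form $\alpha$ on $\mathcal{X}$, which descends to a closed form on $M=\Gamma\backslash\mathcal{X}$. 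The content of the theorem is that $\alpha$ can be taken to be \emph{bounded}; that is, one must show the bounded class $\kappa$ constrains the de Rham representative.

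The key step is to use boundedness at the level of the group cohomology of $G$ rather than of $\Gamma$. The comparison map is compatible with restriction: there is a commutative square relating $c^*_G : H^*_{c,b}(G,\mathbb{R})\to H^*_c(G,\mathbb{R})$ and $c^*_\Gamma$ via the restriction maps $res^*_b$ and $res^*$ induced by $\Gamma<G$. The crucial input is that the pullback of $\kappa$ — more precisely, the class $c^*_\Gamma(\kappa)$ — lies in the image of $res^*$ from $H^*_c(G,\mathbb{R})$, hence is represented by a \emph{$G$-invariant} closed form $\alpha_0$ on $\mathcal{X}$; this is exactly the regime where Wienhard's Proposition 2.1 (or Corollary 2.4 in the rank-one case, which is what the corollaries will need) applies. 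Then I would realize $c^*_\Gamma(\kappa)$ by $\alpha_0$ on $M$ and observe that a $G$-invariant form on $\mathcal{X}$ descends to a bounded form on $M$ because its pointwise norm is constant. The representing form is closed and bounded, which is the conclusion.

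Let me be more careful about the mechanism: one does not literally expect every bounded class to come from $H^*_{c,b}(G,\mathbb{R})$, so the argument proceeds cocycle-wise. Given $\kappa = [b]$ with $b$ a bounded cocycle on $\Gamma$, pass to the continuous bounded cochain complex $C^*_{c,b}(G,\mathbb{R})^\Gamma$ computing $H^*_{c,b}(\Gamma,\mathbb{R})$; here one uses the Burger–Monod machinery that the bounded cohomology of $\Gamma$ is computed on $L^\infty$-type resolutions with $\Gamma$-action, and by induction/amenability arguments one can represent $c^*_\Gamma(\kappa)$ through the double complex $C^*(\Omega^*(\mathcal{X}))$ by an element that is both closed and of bounded norm. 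The output of this diagram chase is a closed bounded form on $\mathcal{X}$ invariant under $\Gamma$, descending to $M$. The hard part, I expect, is precisely this comparison-of-resolutions step: showing that the bounded representative on $\Gamma$ can be pushed through the Van Est double complex \emph{without losing boundedness}, i.e. that the isomorphism $H^*_c(\Gamma,\mathbb{R})\cong H^*(\Omega^*(\mathcal{X})^\Gamma)$ can be made norm-controlled on the image of the comparison map. This is the technical heart of Burger–Iozzi's argument, and I would invoke their machinery — the use of the complex of $\mathcal{X}$-valued forms with a $G$-action and the amenability of the visual boundary — rather than reprove it; the remaining steps (descent, constancy of norm of invariant forms) are routine.
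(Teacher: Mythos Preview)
This theorem is not proved in the paper; it is quoted from Burger--Iozzi \cite{BI07}. The paper's only ``proof'' is the paragraph following the statement: Burger and Iozzi construct, for any closed subgroup $L<G$, a map
\[
\delta^*_{\infty,L}:H^*_{c,b}(L,\mathbb{R})\longrightarrow H^*(\Omega^*_\infty(\mathcal{X})^L)
\]
such that $c^*_L$ factors as $i^*_{\infty,L}\circ\delta^*_{\infty,L}$ through the cohomology of the complex of $L$-invariant \emph{bounded} forms. That factorization is exactly the content of the theorem. Neither Wienhard's Proposition~2.1 nor the rank-one Corollaries~2.3--2.4 enter here; they are used elsewhere in the paper for different purposes.

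Your concrete first approach has a genuine gap. You assert that ``the class $c^*_\Gamma(\kappa)$ lies in the image of $res^*$ from $H^*_c(G,\mathbb{R})$, hence is represented by a $G$-invariant closed form.'' This is false in general: the restriction $res^*:H^*_c(G,\mathbb{R})\to H^*(\Gamma,\mathbb{R})$ is very far from surjective (e.g.\ $H^*_c(G,\mathbb{R})$ is finite-dimensional and concentrated in special degrees, whereas $H^*(\Gamma,\mathbb{R})$ can be arbitrarily rich), and there is no mechanism by which boundedness of $\kappa$ forces $c^*_\Gamma(\kappa)$ into this image. Consequently Wienhard's proposition about $G$-invariant forms is simply not applicable to an arbitrary bounded class on $\Gamma$.

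You recognize this and retreat to a second approach: a double-complex/diagram-chase argument producing a $\Gamma$-invariant bounded form, where you explicitly say you ``would invoke their machinery rather than reprove it.'' But the machinery you are invoking \emph{is} the Burger--Iozzi construction of $\delta^*_{\infty,\Gamma}$, i.e.\ the theorem itself. So the second paragraph is not a proof but a restatement of what needs to be done. The actual work in \cite{BI07} is to build $\delta^*_{\infty,L}$ via a Poisson-transform/boundary-map argument that lands directly in bounded forms; amenability of a minimal parabolic (not of the visual boundary per se) and the $L^\infty$ resolution on $G/P$ are the ingredients, and the passage to bounded differential forms is effected by smoothing/integration over $K$, not by the Van~Est double complex alone.
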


In fact, they construct a map $\delta^*_{\infty,L}:H^*_{c,b}(L,\mathbb{R})\rightarrow H^*(\Omega^*_\infty (\mathcal{X})^L)$ for any closed subgroup $L$ of $G$ such that the diagram
$$\xymatrix{
H^*_{c,b}(L,\mathbb{R}) \ar[r]^{c^*_L} \ar[rd]_{\delta^*_{\infty,L}} &
H^*_c(L,\mathbb{R}) &
H^*(\Omega^*(\mathcal{X})^L) \ar[l]_{\cong} \\
& H^*(\Omega^*_\infty(\mathcal{X})^L) \ar[ru]_{i^*_{\infty,L}} }$$
commutes, where $i^*_{\infty,L}$ is the map induced in cohomology by the inclusion of complexes
$$i^*_\infty : \Omega^*_\infty(\mathcal{X})\rightarrow \Omega^*(\mathcal{X}).$$
From this commutative diagram, it is seen that any class in the image of the comparison map
is representable by a bounded closed form.

\section{$\mathbb{R}$-rank one locally symmetric spaces}

In this section, we study the bounded fundamental class of
$\mathbb{R}$-rank one locally symmetric spaces $M$.
Let $\mathcal{X}$ be the symmetric universal
covering space of $M$ and $\Gamma$ be the fundamental group of $M$.
Denote by $G$ the identity component of the isometry group of $\mathcal{X}$.

\begin{thm}\label{thm6-1}
Let $M$ be an $n$-dimensional, $\mathbb{R}$-rank one locally symmetric space. Then,
$[\widehat{\omega}_M]=0$ in $H^n_b(M,\mathbb{R})$ if and only if the Riemannian volume form $\omega_M$ on $M$ is the differential of a bounded differential form on $M$.
\end{thm}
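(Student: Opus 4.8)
The plan is to prove the two implications separately, since one direction is essentially formal and the other requires the structural input about $\mathbb{R}$-rank one locally symmetric spaces. For the ``if'' direction, suppose $\omega_M = d\beta$ for a bounded $(n-1)$-form $\beta$ on $M$. Then for any singular $n$-simplex $\sigma:\Delta^n\to M$ we have, exactly as in the proof of Theorem \ref{thm5-1},
$$\widehat{\omega}_M(\sigma)=\int_{Str_n(\sigma)}\omega_M=\int_{Str_n(\sigma)}d\beta=\int_{\partial Str_n(\sigma)}\beta=\int_{Str_{n-1}(\partial\sigma)}\beta=\delta(\beta\circ Str_{n-1})(\sigma).$$
Because $\beta$ is bounded in sup norm and the volume (hence the diameter, hence the $\beta$-integral) of geodesic simplices in $M$ is uniformly bounded by \cite{IY81}, the cochain $\beta\circ Str_{n-1}$ lies in $C^{n-1}_b(M,\mathbb{R})$. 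Thus $\widehat{\omega}_M$ is the coboundary of a bounded cochain and $[\widehat{\omega}_M]=0$ in $H^n_b(M,\mathbb{R})$. This direction uses nothing about rank one or symmetry.

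For the ``only if'' direction, assume $[\widehat{\omega}_M]=0$ in $H^n_b(M,\mathbb{R})$. The idea is to run this through the Burger--Iozzi machinery of Section 6. Since $M=\Gamma\backslash\mathcal{X}$ is a locally symmetric space of $\mathbb{R}$-rank one, $\Gamma$ is a torsion-free discrete subgroup of the semisimple Lie group $G$, and $H^n_b(\Gamma,\mathbb{R})\cong H^n_b(M,\mathbb{R})$, $H^n(\Gamma,\mathbb{R})\cong H^n(M,\mathbb{R})$. The bounded fundamental class $[\widehat{\omega}_M]$ maps under the comparison map $c^n_M$ to the ordinary volume class $[\omega_M]$ (this is the content of Section 4 via Lemma \ref{lem4-1}). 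If $[\widehat{\omega}_M]=0$ then $c^n_M([\widehat{\omega}_M])=[\omega_M]=0$ in $H^n(M,\mathbb{R})$, which is automatic for noncompact $M$, so I need to extract genuinely bounded information. The right tool is the refined Burger--Iozzi diagram: the class $[\widehat{\omega}_M]\in H^n_b(\Gamma,\mathbb{R})$ is pulled back (via the Van Est-type map $\delta^n_{\infty,\Gamma}$) to a class in $H^n(\Omega^*_\infty(\mathcal{X})^\Gamma)=H^n_{bdR}(M,\mathbb{R})$, and under $i^n_{\infty,\Gamma}$ this maps to the de Rham volume class of $M$. Chasing the commutative square, since the bounded class is zero, its image in $H^n_{bdR}(M,\mathbb{R})$ is zero; that is, $\omega_M = d\beta$ for some $\beta\in\Omega^{n-1}_\infty(M,\mathbb{R})$, which in particular is a bounded $(n-1)$-form. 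This is exactly the conclusion.

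The main obstacle is making the chase in the preceding paragraph rigorous: one must identify $[\widehat{\omega}_M]$ as precisely the image, under $\delta^n_{\infty,\Gamma}$ composed with the canonical identifications, of the $G$-invariant volume form $\omega_{\mathcal{X}}$ regarded as a continuous (bounded?) cohomology class, and one must check that $[\widehat{\omega}_M]=0$ as a bounded \emph{cohomology} class (not merely that some representative is a coboundary of an unbounded cochain) forces the bounded de Rham class to vanish. The delicate point is that $\omega_{\mathcal{X}}$ is not known a priori to define a nonzero class in $H^n_{c,b}(G,\mathbb{R})$ in a way visible to $\delta^n_{\infty,G}$; rather, one works at the level of $\Gamma$, using that $H^n_{c,b}(\Gamma,\mathbb{R})=H^n_b(M,\mathbb{R})$ and that Thurston's straightening realizes the bounded fundamental class. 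I would therefore argue: the straightened cocycle $\widehat{\omega}_M$, transported to the bar complex of $\Gamma$, is cohomologous in $C^*_{c,b}(G,\mathbb{R})^\Gamma$ to the image under $\delta^n_{\infty,\Gamma}$'s cochain-level definition of the invariant form $\omega_{\mathcal{X}}$; then vanishing of $[\widehat{\omega}_M]$ in $H^n_{c,b}(\Gamma,\mathbb{R})$ gives, through the commutative diagram of Section 6, vanishing of $[\omega_M]$ in $H^n(\Omega^*_\infty(\mathcal{X})^\Gamma)=H^n_{bdR}(M,\mathbb{R})$, i.e.\ a bounded primitive of $\omega_M$. Everything else — the equivalence of $H^*_{c,b}(\Gamma,\mathbb{R})$ with $H^*_b(\Gamma,\mathbb{R})$ and with $H^*_b(M,\mathbb{R})$, and the identification of the volume class — is standard and can be cited from \cite{Mo01}, \cite{BI07}.
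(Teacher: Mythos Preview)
Your proposal is essentially correct and follows the same route as the paper: the ``if'' direction via $\widehat{\omega}_M=\delta(\beta\circ Str_{n-1})$ is identical, and for the ``only if'' direction both you and the paper invoke the Burger--Iozzi map $\delta^n_{\infty,\Gamma}$ and the commutative diagram of \cite{BI07}.

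The one place where your argument wobbles is precisely where the paper is clean. You worry that ``$\omega_{\mathcal{X}}$ is not known a priori to define a nonzero class in $H^n_{c,b}(G,\mathbb{R})$ in a way visible to $\delta^n_{\infty,G}$'' and propose instead to work directly at the $\Gamma$-level, unpacking $\delta^n_{\infty,\Gamma}$ on cochains. The paper shows this detour is unnecessary: the straightened volume cocycle $\widehat{\omega}_{\mathcal{X}}(x_0,\ldots,x_n)=\int_{[x_0,\ldots,x_n]}\omega_{\mathcal{X}}$ is a \emph{$G$-invariant} bounded cocycle (not merely $\Gamma$-invariant), hence represents a class in $H^n_{c,b}(G,\mathbb{R})$, and one checks $\pi^*[\widehat{\omega}_M]=res^n_b([\widehat{\omega}_{\mathcal{X}}])$. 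At the $G$-level, since $\Omega^n_\infty(\mathcal{X})^G=\Omega^n(\mathcal{X})^G$ in top degree, the map $\delta^n_{\infty,G}$ coincides with the comparison map $c^n_G$, and the Van Est isomorphism gives $c^n_G([\widehat{\omega}_{\mathcal{X}}])=\omega_{\mathcal{X}}$ directly. Commutativity of the square then yields $\delta^n_{\infty,\Gamma}(res^n_b([\widehat{\omega}_{\mathcal{X}}]))=res^n_\infty(\omega_{\mathcal{X}})$, which is exactly the bounded de Rham class $[\omega_M]\in H^n(\Omega^*_\infty(\mathcal{X})^\Gamma)$. So the identification you flag as the ``main obstacle'' is handled not by a cochain-level computation at $\Gamma$, but by lifting to $G$ and using that in top degree the Burger--Iozzi map \emph{is} the comparison map. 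Once you see this, the diagram chase is immediate.
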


\begin{proof}
We choose convenient complexes for $H^*_{c,b}(G,\mathbb{R})$ and
$H^*_b(\Gamma,\mathbb{R})$.
From \cite[Corollary 7.4.10]{Mo01}, those continuous bounded cohomology groups
can be obtained as the cohomology of complexes $C^*_b(\mathcal{X},\mathbb{R})^G$ and
$C^*_b(\mathcal{X},\mathbb{R})^\Gamma$, respectively, with the canonical homogeneous
coboundary operator, where $C^*_b(\mathcal{X},\mathbb{R})$ is defined by
$$ C^k_b(\mathcal{X},\mathbb{R})=\{f:\mathcal{X}^{k+1}\rightarrow \mathbb{R}\text{ }|\text{ }
f \text{ is a continuous bounded function} \}.$$
The action of $G$ and $\Gamma$ are given by the natural diagonal action
on the Cartesian product $\mathcal{X}^{k+1}$.
The restriction map $res^*_b :H^*_{c,b}(G,\mathbb{R})\rightarrow H^*_b(\Gamma,\mathbb{R})$
induced by the inclusion $\Gamma <G$ is realized at the cochain level by the
inclusion $C^*_b(\mathcal{X},\mathbb{R})^G \subset C^*_b(\mathcal{X},\mathbb{R})^\Gamma$.

Let $\omega_\mathcal{X}$ be the $G$-invariant Riemannian volume form
on $\mathcal{X}$. Then, one can consider a cochain $\widehat{\omega}_\mathcal{X} :
\mathcal{X}^{n+1} \rightarrow \mathbb{R}$ in $C^*_b(\mathcal{X},\mathbb{R})$ defined by
$$\widehat{\omega}_\mathcal{X}(x_0,\ldots,x_n)=\int_{[x_0,\ldots,x_n]} \omega_\mathcal{X},$$
where $[x_0,\ldots,x_n]$ is the geodesic simplex with an ordered vertex set $\{ x_0,\ldots,x_n \}$ in
$\mathcal{X}$. Since the form $\omega_\mathcal{X}$ is a $G$-invariant closed bounded form and the volume of geodesic simplices in $\mathcal{X}$ is uniformly bounded from above, the cochain $\widehat{\omega}_\mathcal{X}$ is a $G$-invariant bounded cocycle.
Thus, it determines a bounded cohomology class $[\widehat{\omega}_\mathcal{X}]$ in $H^n_b(G,\mathbb{R})$.

In order to avoid confusion with $C^*_b(\mathcal{X},\mathbb{R})$ defined above,
let $B^*(\mathcal{X},\mathbb{R})$ be the subcomplex of the singular cochain complex of $\mathcal{X}$
consisting of bounded cochains. Then, the map $\pi^* : C^*_b(M,\mathbb{R}) \rightarrow B^*(\mathcal{X},\mathbb{R})$ establishes an isometric isomorphism between $C^*_b(M,\mathbb{R})$ and $B^*(\mathcal{X},\mathbb{R})^\Gamma$, and commutes with the differentials, where
$\pi : \mathcal{X} \rightarrow M$ is the universal covering of $M$.
From the definition of $\widehat{\omega}_M$,
$$\pi^* \widehat{\omega}_M (\tau) = \omega_M (Str_n \circ \pi(\tau))
= \omega_M (\pi \circ Str_n(\tau)) = \omega_\mathcal{X}(Str_n(\tau)),$$
for a singular simplex $\tau : \Delta^n \rightarrow \mathcal{X}$.
One can easily notice that $\pi^* \widehat{\omega}_M(\tau)$ depends only on the vertices of
the simplex $\tau : \Delta^n \rightarrow \mathcal{X}$.
Such a cochain is essentially a function $\mathcal{X}^{n+1} \rightarrow \mathbb{R}$, and so
one can consider the cocycle $\pi^* \widehat{\omega}_M$ as a function $\mathcal{X}^{n+1} \rightarrow \mathbb{R}$ defined by
$$ \pi^* \widehat{\omega}_M (x_0,\cdots,x_n) = \omega_\mathcal{X} ([x_0,\ldots,x_n])=\widehat{\omega}_\mathcal{X} (x_0,\ldots,x_n).$$
Thus, $\pi^*:H^*_b(M,\mathbb{R})\rightarrow
H^*(B^*(\mathcal{X},\mathbb{R})^\Gamma)\cong H^*_b(\Gamma,\mathbb{R})$ is an
isometric isomorphism with $\pi^*[\widehat{\omega}_M] = res^n_b([\widehat{\omega}_\mathcal{X}])$.

From \cite[Proposition 3.1]{BI07}, we have the following commutative diagram
for a discrete subgroup $\Gamma$ of $G$.
$$ \xymatrixcolsep{4pc}\xymatrix{
H^n_{c,b}(G,\mathbb{R}) \ar[r]^-{\delta^n_{\infty,G}} \ar[d]_-{res_b^n} &
\Omega_\infty^n(\mathcal X)^G=\mathbb{R}\cdot \omega_\mathcal{X} \ar[d]^-{res_\infty^n} \\
H^n_b(\Gamma,\mathbb{R}) \ar[r]^-{\delta^n_{\infty,\Gamma}} \ar[rd]_-{c^n_\Gamma} &
H^n(\Omega^*_\infty(\mathcal{X})^\Gamma) \ar[r]^-{i^n_{\infty,\Gamma}} &
H^n_{dR}(M,\mathbb{R}) \ar[ld]^-{\cong} \\
& H^*_c(\Gamma,\mathbb{R}) &
}$$

%Let $\omega_N$ be the Riemannian volume form on $N$.
%Actually, $\omega_N=L\backslash \omega_\mathcal{X}$. By the following commutative diagram
%$$ \xymatrixcolsep{4pc}\xymatrix{
%H^n_{c,b}(G,\mathbb{R}) \ar[r]^-{c^n_G=\delta^n_{\infty,G}} \ar[d]_-{res_b^n} &
%H^n_c(G,\mathbb{R})=\Omega^n(\mathcal X)^G \ar[d]^-{res^n_c=i^n_{\infty,L}\circ res^n_\infty} \\
%H^n_b(L,\mathbb{R}) \ar[r]^-{c^n_L} & H^n_c(L,\mathbb{R}) }$$ we
%have $c^n_L \circ res^n_b([\widehat{\omega}_\mathcal{X}])=res^n_c
%\circ c^n_G([\widehat{\omega}_\mathcal{X}])=res^n_c([\omega_\mathcal{X}])$,
% and it is associated to the volume class $[\omega_N]$ in
% $H^n_{dR}(N,\mathbb{R})$ under the isomorphism
%$H^n_c(L,\mathbb{R})\cong H^n_{dR}(N,\mathbb{R})$. Hence,
%$$i^n_{\infty,L} \circ res^n_\infty \circ \delta^n_{\infty,G}([\widehat{\omega}_\mathcal{X}]) =
%i^n_{\infty,L} \circ \delta^n_{\infty,\Gamma} \circ res^n_b
%([\widehat{\omega}_\mathcal{X}]) = [\omega_N].$$

Note that $\Omega^n_\infty(\mathcal{X})^G=\Omega^n(\mathcal{X})^G=H^n_c(G,\mathbb{R})$ because
$G$-invariant differential forms on $\mathcal{X}$ are closed and bounded. Hence, one can easily
see $\delta^n_{\infty,G}=c^n_G$ by Theorem \ref{BI}.
Since $\delta^n_{\infty,G}=c^n_G$ and
$c^n_G([\widehat{\omega}_\mathcal{X}])=\omega_{\mathcal X}$,
$$\delta^n_{\infty,G}([\widehat{\omega}_\mathcal{X}])=\omega_{\mathcal
X}.$$
%On the other hand, if we set $\delta^n_{\infty,G}([\widehat{\omega}_\mathcal{X}])=r\omega_\mathcal{X}$ for
%some $r\in \mathbb{R}$, we have
%$$[\omega_N]= i^n_{\infty,L} \circ res^n_\infty \circ \delta^n_{\infty,G}([\widehat{\omega}_\mathcal{X}])
%= i^n_{\infty,L} (r [\omega_\mathcal{X}])= r \cdot i^n_{\infty,L} ([\omega_\mathcal{X}])= r [\omega_N].$$
%By choosing a closed manifold $N$, one can conclude $r=1$ and finally get $$\delta^n_{\infty,G}([\widehat{\omega}_\mathcal{X}])=\omega_\mathcal{X}$$

Now, suppose that $[\widehat{\omega}_M]=0$ in $H^n_b(M,\mathbb{R})$.
Then we have $$res^n_b([\widehat{\omega}_\mathcal{X}])=\pi^* [\widehat{\omega}_M]=0.$$
From the commutative diagram, we have
$$res_\infty^n(\omega_\mathcal{X})= res_\infty^n \circ \delta^n_{\infty,G}([\widehat{\omega}_\mathcal{X}])
= \delta^n_{\infty,\Gamma} \circ res_b^n
([\widehat{\omega}_\mathcal{X}])=0.$$ This means that there exists a
$\Gamma$-invariant bounded differential form $\beta$ on
$\mathcal{X}$ such that $\omega_\mathcal{X}=d\beta$. Since
$\omega_\mathcal{X}$ and $\beta$  descend to differential forms on
$M$ and $\omega_\mathcal{X}$ descends to the Riemannian volume form
$\omega_M$ on $M$, we can conclude that $\omega_M$ is the
differential of a bounded differential form on $M$.

Conversely, suppose that the volume form $\omega_M$ on $M$ is the differential of a bounded form
$\beta$ on $M$. Then
$\widehat{\omega}_M(\sigma)=\omega_M(Str_n(\sigma))=d\beta (Str_n(\sigma))
=\beta(\partial Str_n(\sigma))=\beta(Str_{n-1}(\partial \sigma))=\delta (\beta \circ Str_{n-1})(\sigma)$,
where $\delta :C^{n-1}(M,\mathbb{R})\rightarrow C^n(M,\mathbb{R})$ is the coboundary operator.
This shows $\widehat{\omega}_M= \delta(\beta \circ Str_{n-1})$. Since the volume of any geodesic
simplex in $M$ is
uniformly bounded from above, there exists $D>0$ such that
$$|\beta \circ Str_{n-1}(\sigma)| \leq \| \beta \|_\infty \cdot Vol(Str_{n-1}(\sigma)) \leq \| \beta
\|_\infty \cdot D.$$
This means that $\beta \circ Str_{n-1} \in C^{n-1}_b(M)$, and hence, $[\widehat{\omega}_M]=0$ in
$H^n_b(M,\mathbb{R})$.
\end{proof}

From Theorem \ref{thm6-1}, one can see that the bounded fundamental class is closely
related to the existence of a bounded primitive of the Riemannian volume form.
Also, the existence of a bounded primitive of the volume form is closely
related to the Cheeger isoperimetric constant. The Cheeger
isoperimetric constant $h(M)$ of a complete non-compact Riemannian
manifold $M$ is defined by
$$h(U)= \inf_{U\subset M} \frac{\text{Vol}(\partial U)}{\text{Vol}(U)},$$
where $U$ ranges over all connected, open submanifolds of $M$ with
compact closure and smooth boundary.
Gromov asserted that the positivity of the Cheeger isoperimetric constant of $M$ implies
the existence of a bounded primitive of the Riemannian volume form on $M$.
Thus, one can notice that the bounded fundamental class is closely related to
the Cheeger isoperimetric constant.
In fact, a geometrically finite manifold in Theorem \ref{thm5-1} is a kind of
manifold that has a positive Cheeger isoperimetric constant.

Let $M$ be a  negatively curved geometrically finite manifold of
infinite volume with at least three dimensions. Then, Hamenst\"{a}dt
\cite{Ha04} showed that the bottom of the $L^2$-spectrum of $M$ is
strictly positive. More precisely, there is a constant $c_0>0$
depending only on the dimension and curvature of $M$ such that the
bottom $\lambda_0(M)$ of the $L^2$-spectrum of $M$ satisfies
$$\lambda_0(M)\geq \frac{c_0}{\text{Vol}(\mathcal{N}_1(\mathcal{C}_M))^2}>0.$$

In addition, if the Ricci curvature of $M$ is uniformly bounded from
below, the Cheeger isoperimetric constant $h(M)$ is strictly
positive if and only if the bottom of the $L^2$-spectrum of $M$ is
positive \cite{Ji10}. Hence, the Cheeger isoperimetric constant of a
negatively curved geometrically finite manifold with infinite volume
is strictly positive.

It can be derived from Theorems \ref{thm5-1} and \ref{thm6-1} that
the Riemannian volume form on geometrically finite,
$\mathbb{R}$-rank one locally symmetric spaces admits a bounded primitive.

\begin{cor}\label{cor6-2}
Let $M$ be a geometrically finite, $\mathbb{R}$-rank one locally symmetric space
with infinite volume. Then, the Riemannian volume form on $M$ admits a bounded primitive.
\end{cor}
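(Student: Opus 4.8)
The plan is to combine the two main vanishing results already established in the paper. First I would invoke Theorem \ref{thm5-1} applied to the special case at hand: since $M$ is a geometrically finite, $\mathbb{R}$-rank one locally symmetric space with infinite volume, it is in particular an $n$-dimensional pinched negatively curved geometrically finite manifold with infinite volume (the sectional curvature of an $\mathbb{R}$-rank one symmetric space is pinched between two negative constants). Hence Theorem \ref{thm5-1} gives $[\widehat{\omega}_M] = 0$ in $H^n_b(M,\mathbb{R})$.

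Next I would feed this conclusion into Theorem \ref{thm6-1}. That theorem states that for an $n$-dimensional $\mathbb{R}$-rank one locally symmetric space $M$, the vanishing $[\widehat{\omega}_M] = 0$ in $H^n_b(M,\mathbb{R})$ is equivalent to the Riemannian volume form $\omega_M$ being the differential of a bounded differential form on $M$. Since we have just verified the left-hand side of this equivalence, we obtain a bounded $(n-1)$-form $\beta$ on $M$ with $d\beta = \omega_M$; that is, $\omega_M$ admits a bounded primitive. This completes the proof.

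There is essentially no obstacle here: the corollary is a formal consequence of the two theorems, and the only point requiring a sentence of justification is that an $\mathbb{R}$-rank one locally symmetric space satisfies the hypotheses of Theorem \ref{thm5-1} — namely that such a space carries a pinched negatively curved metric. This is standard: the rank one symmetric spaces of noncompact type are the real, complex, and quaternionic hyperbolic spaces and the Cayley hyperbolic plane, all of which have sectional curvature pinched in a negative interval (normalized, between $-4$ and $-1$ in the non-real cases, and equal to $-1$ in the real case), and this pinching descends to any locally symmetric quotient $M$. Thus the only care needed is to state the reduction cleanly; the substantive work has all been done in Theorems \ref{thm5-1} and \ref{thm6-1}.
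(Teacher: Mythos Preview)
Your proposal is correct and matches the paper's own argument exactly: the paper states just before the corollary that it ``can be derived from Theorems \ref{thm5-1} and \ref{thm6-1},'' which is precisely the two-step combination you outline. The extra sentence you add about $\mathbb{R}$-rank one symmetric spaces being pinched negatively curved is a welcome clarification but not a departure from the intended proof.
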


In particular, when $M$ is a complete hyperbolic three-manifold with infinite volume,
Canary \cite{Ca92} showed that
$M$ is geometrically finite if and only if the Cheeger isoperimetric constant of $M$ is positive.
Moreover, Soma \cite{So97} showed that $[\widehat{\omega}_M]=0$ in $H^3_b(M,\mathbb{R})$ if and
only if
$M$ is geometrically finite.
Applying our results to complete hyperbolic three-manifolds,
we can summarize four equivalent conditions as follows.

\begin{cor}\label{cor6-3}
Let $M$ be a complete hyperbolic three-manifold with infinite volume.
Then, the following are equivalent.
\begin{itemize}
\item[(a)] $[\widehat{\omega}_M]=0$ in $H^3_b(M,\mathbb{R})$.
\item[(b)] The Cheeger isoperimetric constant of $M$ is strictly positive.
\item[(c)] The Riemannian volume form on $M$ admits a bounded primitive.
\item[(d)] $M$ is geometrically finite.
\end{itemize}
\end{cor}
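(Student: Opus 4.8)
The plan is to obtain the four-way equivalence purely formally, by assembling three pairwise equivalences into a single chain that links all of $(a)$, $(b)$, $(c)$, $(d)$. The first observation I would record is that a complete hyperbolic three-manifold $M$ has universal cover $\mathbb{H}^3$, a real hyperbolic space and hence a symmetric space of $\mathbb{R}$-rank one; consequently $M$ is an $\mathbb{R}$-rank one locally symmetric space and all the standing hypotheses of Theorem~\ref{thm6-1} are met. This is the step that lets our own result enter the argument.

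First I would establish $(a)\Leftrightarrow(c)$ directly from Theorem~\ref{thm6-1}. Applied to the rank-one locally symmetric space $M$, that theorem asserts that $[\widehat{\omega}_M]=0$ in $H^3_b(M,\mathbb{R})$ if and only if the Riemannian volume form $\omega_M$ is the differential of a bounded differential form, which is precisely the statement that $\omega_M$ admits a bounded primitive. Next I would invoke Soma's theorem \cite{So97}, which for a complete hyperbolic three-manifold of infinite volume states that $[\widehat{\omega}_M]=0$ in $H^3_b(M,\mathbb{R})$ if and only if $M$ is geometrically finite; this is exactly $(a)\Leftrightarrow(d)$. Finally I would invoke Canary's theorem \cite{Ca92}, which for such $M$ says that $M$ is geometrically finite if and only if the Cheeger isoperimetric constant $h(M)$ is strictly positive, giving $(b)\Leftrightarrow(d)$. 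Combining the three yields the chain $(c)\Leftrightarrow(a)\Leftrightarrow(d)\Leftrightarrow(b)$, so all four conditions coincide.

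Since the deduction is a formal synthesis of cited results, there is no genuine analytic obstacle; the only point requiring care is the compatibility of hypotheses. Specifically, I would verify that the infinite-volume assumption in the corollary is the same hypothesis under which Soma and Canary state their theorems, and that $M$ does qualify as an $\mathbb{R}$-rank one locally symmetric space so that Theorem~\ref{thm6-1} is legitimately applicable rather than merely applicable to constant-curvature $\mathbb{H}^3$ by coincidence of notation. The substantive new content supplied by this paper is the link $(a)\Leftrightarrow(c)$ via Theorem~\ref{thm6-1}, which connects the bounded fundamental class to the bounded primitive; the equivalences $(a)\Leftrightarrow(d)$ and $(b)\Leftrightarrow(d)$ are drawn from \cite{So97} and \cite{Ca92} respectively. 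Once the hypothesis-matching is confirmed, the corollary follows immediately.
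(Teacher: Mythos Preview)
Your proposal is correct and matches the paper's own proof essentially line for line: the paper invokes Soma for $(a)\Leftrightarrow(d)$, Canary for $(b)\Leftrightarrow(d)$, and Theorem~\ref{thm6-1} for $(a)\Leftrightarrow(c)$, exactly as you do. The only addition in the paper is a supplementary paragraph giving an alternative, self-contained derivation of $(a)\Leftrightarrow(d)$ via Theorem~\ref{thm5-1}, Theorem~\ref{thm6-1}, Stokes' theorem, and Canary, thereby recovering Soma's result rather than citing it.
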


\begin{proof}
By the works of Soma and Canary, $(a)$, $(b)$, and $(d)$ are equivalent.
Theorem \ref{thm6-1} implies that $(a)$ is equivalent to $(c)$.
Therefore, the proof is completed. Furthermore, we can give an alternative
proof of Soma's result about the equivalence of $(a)$ and $(d)$ as follows.

If $M$ is elementary, the fundamental group $\Gamma$ of $M$
has an abelian subgroup of finite index.
Then, the bounded cohomology of $M$ in dimension $3$ is trivial, and hence, $[\widehat{\omega}_M]=0$
in $H^3_b(M,\mathbb{R})$.
If $M$ is non-elementary and geometrically finite,
$[\widehat{\omega}_M]=0$ in $H^3_b(M,\mathbb{R})$ by Theorem \ref{thm5-1}.

Conversely, suppose that $\Gamma$ is non-elementary and
$[\widehat{\omega}_M]=0$ in $H^3_b(M,\mathbb{R})$.
Then, the volume form on $M$ admits a bounded primitive by Theorem \ref{thm6-1}.
This implies that the Cheeger
isoperimetric constant $h(M)$ is strictly positive by Stokes' theorem.
By the work of Canary \cite{Ca92}, $M$ has to be geometrically finite.
\end{proof}

Among the four conditions in Corollary \ref{cor6-3}, we observe that $(b)$ does not imply $(d)$ in general.
Here is a counterexample. Let $M=\Gamma\backslash H^n_\mathbb{H}$ be a
quaternionic hyperbolic manifold of infinite volume which is not
geometrically finite. By \cite[Main theorem]{Le03}, the bottom of
the $L^2$-spectrum of $M$ is positive, which implies that the Cheeger
isoperimetric constant of $M$ is positive even though $M$ is not
geometrically finite. Such a geometrically infinite manifold can be
explicitly constructed as follows. Take a geometrically infinite
surface group hyperbolic three-manifold (a degenerate surface group).
Since $H^1_\mathbb{H}$ is isometric to $H^4_\br$, we can make the
surface group $\pi_1(S)\subset SO(3,1)\subset SO(4,1)$ stabilizes
$H^1_\mathbb{H}$, whose limit set is again contained in
$H^1_\mathbb{H}$. Then, it is easy to see that any
$\epsilon$-neighborhood of the convex core has an infinite volume.
Hence, we expect that the three conditions, $(a)$, $(b)$, and $(c)$, are equivalent
in general cases.

\end{document}